\documentclass[12pt]{article}
\usepackage{graphicx}
\usepackage{amsmath,amsthm,amssymb,amsfonts,euscript,enumerate}
\usepackage{amsthm}
\usepackage{color,wrapfig}
\usepackage{pxfonts}
\usepackage{verbatim}
\newtheorem{thm}{Theorem}[section]
\newtheorem{cor}[thm]{Corollary}
\newtheorem{lem}[thm]{Lemma}

\newtheorem{rmk}[thm]{Remark}

\setlength{\topmargin}{-0.1in}
\setlength{\textheight}{8.7in}
\setlength{\textwidth}{6.5in}
\setlength{\oddsidemargin}{0in}
\setlength{\evensidemargin}{0in}

\newcommand{\R}{{\mathbb{R}}}
\newcommand{\Z}{{\mathbb{Z}}}

\newcommand{\2}{\overline}
\newcommand{\3}{\varepsilon}

\def\ni{\noindent}

\baselineskip=16pt

\begin{document}
\title{Exact decay rate of a nonlinear elliptic equation related to the 
Yamabe flow} 
\author{Shu-Yu Hsu\\
Department of Mathematics\\
National Chung Cheng University\\
168 University Road, Min-Hsiung\\
Chia-Yi 621, Taiwan, R.O.C.\\
e-mail: syhsu@math.ccu.edu.tw}
\date{Jan 11, 2013}
\smallbreak \maketitle
\begin{abstract}
Let $0<m<\frac{n-2}{n}$, $n\ge 3$, $\alpha=\frac{2\beta +\rho}{1-m}$ and 
$\beta>\frac{m\rho}{n-2-mn}$ for some constant $\rho>0$. Suppose $v$ is a radially 
symmetric symmetric 
solution of $\frac{n-1}{m}\Delta v^m+\alpha v+\beta x\cdot\nabla v=0$, $v>0$, in $\R^n$. 
When $m=\frac{n-2}{n+2}$, the metric $g=v^{\frac{4}{n+2}}dx^2$ corresponds to a 
locally conformally flat Yamabe shrinking gradient soliton with positive 
sectional curvature. We prove that the solution $v$ of the above nonlinear 
elliptic equation has the exact decay rate 
$\lim_{r\to\infty}r^2v(r)^{1-m}=\frac{2(n-1)(n(1-m)-2)}{(1-m)(\alpha (1-m)-2\beta)}$. 
\end{abstract}

\vskip 0.2truein

Key words: nonlinear elliptic equation, Yamabe soliton, exact decay rate 

AMS 2010 Mathematics Subject Classification: Primary 35J70, 35B40 
Secondary  58J37, 58J05

\vskip 0.2truein
\setcounter{section}{0}

\section{Introduction}
\setcounter{equation}{0}
\setcounter{thm}{0}

Recently there is a lot of study on the equation,
\begin{equation}\label{elliptic-eqn}
\frac{n-1}{m}\Delta v^m+\alpha v+\beta x\cdot\nabla v=0,\quad v>0,\quad 
\mbox{ in }\R^n
\end{equation}
where
\begin{equation}\label{m-relation}
0<m<\frac{n-2}{n},\quad n\ge 3,
\end{equation}
and 
\begin{equation}\label{alpha-beta-relation}
\alpha=\frac{2\beta +\rho}{1-m}
\end{equation}
for some constant $\rho\in\R$ by P.~Daskalopoulos and N.~Sesum 
\cite{DS2}, S.Y.~Hsu \cite{H1}, \cite{H2}, M.A.~Peletier and H.~Zhang 
\cite{PZ} and J.L.~Vazquez \cite{V1}. In the paper \cite{DS2} P.~Daskalopoulos 
and N.~Sesum (cf. \cite{CSZ}, \cite{CMM}) proved the important result that 
any locally conformally flat non-compact gradient Yamabe soliton $g$ with 
positive sectional curvature on a $n$-dimensional manifold, $n\ge 3$, must 
be radially symmetric and has the form $g=v^{\frac{4}{n+2}}dx^2$ where $dx^2$ is 
the Euclidean metric on $\R^n$ and $v$ is a radially symmetric solution of 
\eqref{elliptic-eqn} with $m=\frac{n-2}{n+2}$ and $\alpha$, $\beta$, satisfying
\eqref{alpha-beta-relation} for some constant $\rho>0$, $\rho=0$ or $\rho<0$,
depending on whether $g$ is a shrinking, steady, or expanding Yamabe soliton.
 
On the other hand as observed by B.H.~Gilding, L.A.~Peletier and H.~Zhang
\cite{GP}, \cite{PZ}, and others \cite{DS1}, \cite{DS2}, \cite{V1}, \cite{V2}, 
\eqref{elliptic-eqn} also arises in the study of the self-similar solutions of
the degenerate diffusion equation,
\begin{equation}\label{fast-diffusion-eqn}
u_t=\frac{n-1}{m}\Delta u^m\quad\mbox{ in }\R^n\times (0,T).
\end{equation}
For example (cf. \cite{H1}, \cite{V1}) if $v$ is a radially symmetric solution of 
\eqref{elliptic-eqn} with 
\begin{equation*}
\alpha=\frac{2\beta +1}{1-m}>0,
\end{equation*}
then for any $T>0$ the function
\begin{equation}
u(x,t)=(T-t)^{\alpha}v(x(T-t)^{\beta})
\end{equation}
is a solution of \eqref{fast-diffusion-eqn} in $\R^n\times (-\infty,T)$.
We refer the reader to the book \cite{V1} and the paper \cite{H1} for the relation
between solutions of \eqref{elliptic-eqn} and the other self-similar solutions
of \eqref{fast-diffusion-eqn} for the other parameter ranges of $\alpha$,
$\beta$. 

Note that when $v$ is a radially symmetric solution of \eqref{elliptic-eqn}, 
then $v$ satisfies 
\begin{equation}\label{ode}
\frac{n-1}{m}\left((v^m)''+\frac{n-1}{r}(v^m)'\right)
+\alpha v+\beta rv'=0,\quad v>0,\quad\mbox{  in }(0,\infty)
\end{equation}
and 
\begin{equation}\label{initial-cond}
\left\{\begin{aligned}
&v(0)=\eta\\
&v'(0)=0\end{aligned}\right.
\end{equation}
for some constant $\eta>0$. Existence of solutions of \eqref{ode}, 
\eqref{initial-cond}, for the case $n\ge 3$, $0<m\le (n-2)/n$, $\beta>0$
and $\alpha\le\beta (n-2)/m$ is proved by S.Y.~Hsu in \cite{H1}. On
the other hand by the result of \cite{PZ} and Theorem 7.4 of \cite{V1} 
if \eqref{m-relation} holds, then there exists a constant $\2{\beta}$ with 
$\2{\beta}=0$ when $m=\frac{n-2}{n+2}$ 
such that for any $\alpha=\frac{2\beta+1}{1-m}$ and $\beta>\2{\beta}$, 
there exists a unique solution of \eqref{ode}, \eqref{initial-cond}. 
Moreover if $0<\alpha=\frac{2\beta+1}{1-m}$ and $\beta<\2{\beta}$, then 
\eqref{ode}, \eqref{initial-cond}, has no global solution.
 
Since the asymptotic behavior of solutions of \eqref{fast-diffusion-eqn} is 
usually similar to the behavior of the radially symmetric self-similar 
solutions of \eqref{fast-diffusion-eqn}, hence in order to understand 
the asymptotic behavior of solutions of \eqref{fast-diffusion-eqn} and the 
asymptotic behavior of locally conformally flat non-compact gradient Yamabe 
soliton, it is important to study the asymptotic behavior of the solution
of \eqref{ode}, \eqref{initial-cond}.

Exact decay rate of solution of \eqref{ode}, 
\eqref{initial-cond}, for the case 
\begin{equation*}
\alpha=\frac{2\beta}{1-m}>0
\end{equation*}
and the case 
\begin{equation*}
\frac{2\beta}{1-m}>\max (\alpha,0),
\end{equation*} 
with $m$, $n$, satisfying \eqref{m-relation} 
is obtained by S.Y.~Hsu in \cite{H1}. When \eqref{m-relation} and 
\eqref{alpha-beta-relation} hold for some constant $\rho>0$, 
although it is known (\cite{DS2}, \cite{V1}) that solution $v$ of 
\eqref{ode}, \eqref{initial-cond}, satisfies $v(r)=O(r^{-\frac{2}{1-m}})$ as 
$r\to\infty$, nothing is known about the exact decay rate of $v$. In 
\cite{H2} when $m=\frac{n-2}{n+2}$, $\beta>\frac{\rho}{n-2}>0$, S.Y.~Hsu 
by using estimates for the scalar curvature of the metric 
$g=v^{\frac{4}{n+2}}dx^2$ where $v$ is a radially symmetric solution of 
\eqref{elliptic-eqn}, proved that
\begin{equation}\label{decay-rate-soliton}
\lim_{r\to\infty}r^2v(r)=\frac{(n-1)(n-2)}{\rho}.
\end{equation}
In this paper we will extend the above result and prove the exact decay rate 
of radially symmetric solution $v$ of \eqref{elliptic-eqn} when 
\eqref{m-relation} and \eqref{alpha-beta-relation} hold for some 
constant $\rho>0$. More precisely we will prove the following theorem.

\begin{thm}\label{main-thm}
Let $\eta>0$, $\rho>0$, $m$, $n$, $\alpha$, $\beta$, satisfy
\eqref{m-relation}, \eqref{alpha-beta-relation}, and
\begin{equation}\label{beta-lower-bd-cond}
\beta>\frac{m\rho}{n-2-mn}.
\end{equation}
Suppose $v$ is a solution of \eqref{ode}, \eqref{initial-cond}. Then
\begin{equation}\label{decay-rate}
\lim_{r\to\infty}r^2v(r)^{1-m}=\frac{2(n-1)(n(1-m)-2)}{(1-m)(\alpha (1-m)-2\beta)}.
\end{equation}
\end{thm}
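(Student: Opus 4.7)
Pass to the logarithmic variable $\tau=\log r$ and introduce the auxiliary functions $\psi(\tau):=rv'(r)/v(r)$ and $\phi(\tau):=r^{2}v(r)^{1-m}$. The desired conclusion \eqref{decay-rate} is equivalent to $\phi(\tau)\to L$ with $L:=\frac{2(n-1)(n(1-m)-2)}{(1-m)\rho}$. Manipulating \eqref{ode} (divide by $v$, differentiate in $\tau$, and use $\alpha(1-m)=2\beta+\rho$) yields the autonomous planar system
\[
\frac{d\psi}{d\tau}=-(n-2)\psi-m\psi^{2}-\frac{(\alpha+\beta\psi)\phi}{n-1}=:F(\psi,\phi),\qquad\frac{d\phi}{d\tau}=\phi\bigl(2+(1-m)\psi\bigr)=:G(\psi,\phi),
\]
whose only equilibrium with $\phi>0$ is $(\psi_{*},\phi_{*})=(-2/(1-m),L)$.

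The first task is to confine the trajectory to a compact subset of $\{\phi>0,\ \alpha+\beta\psi>0\}$. The upper bound $v(r)=O(r^{-2/(1-m)})$ from \cite{DS2,V1} immediately gives $\phi\le C_{0}$. The sign condition $\alpha+\beta\psi>0$ is equivalent to $\alpha v+\beta rv'>0$, which I would prove by contradiction using the integrated form
\[
\tfrac{n-1}{m}r^{n-1}(v^{m})'(r)+(\alpha-n\beta)\!\int_{0}^{r}\!s^{n-1}v(s)\,ds+\beta r^{n}v(r)=0
\]
of \eqref{ode}. If $w(r):=\alpha v+\beta rv'$ had a first zero at $r_{0}>0$, then substituting $v'(r_{0})=-\alpha v(r_{0})/(\beta r_{0})$ into \eqref{ode} to compute $v''(r_{0})$ yields
\[
w'(r_{0})=\frac{\alpha v(r_{0})}{\beta r_{0}(1-m)}\bigl[\beta(n-2-mn)-m\rho\bigr],
\]
which is strictly positive by \eqref{beta-lower-bd-cond}, contradicting $w'(r_{0})\le0$ at a first zero. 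A similar barrier argument based on the integrated ODE yields the positive lower bound $\phi\ge c_{0}>0$, so the trajectory is trapped in a compact set $K\subset\{\phi>0,\ \alpha+\beta\psi>0\}$.

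The crux is then a Bendixson--Dulac argument with weight
\[
B(\psi,\phi):=\frac{1}{\phi(\alpha+\beta\psi)},
\]
positive on $K$. A short computation (the factor $1/(\alpha+\beta\psi)$ is designed so that the $\phi$-dependence cancels) gives
\[
\partial_{\psi}(BF)+\partial_{\phi}(BG) = -\frac{m\beta\psi^{2}+2m\alpha\psi+(n-2)\alpha}{\phi(\alpha+\beta\psi)^{2}}.
\]
The discriminant of the numerator quadratic equals $4m\alpha[m\alpha-(n-2)\beta]$, which is strictly negative iff $m\alpha<(n-2)\beta$; using $m\alpha=m(2\beta+\rho)/(1-m)$, this simplifies exactly to $\beta(n-2-mn)>m\rho$, i.e.\ to hypothesis \eqref{beta-lower-bd-cond}. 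Hence the divergence is strictly negative on $\{\phi>0,\ \alpha+\beta\psi>0\}$, so Bendixson--Dulac rules out periodic orbits and homoclinic cycles there. By the Poincar\'e--Bendixson theorem, the $\omega$-limit of the trajectory in $K$ is forced to be the unique equilibrium $(\psi_{*},\phi_{*})$, which gives $\phi(\tau)\to L$ and hence \eqref{decay-rate}.

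\textbf{Main obstacle.} Hypothesis \eqref{beta-lower-bd-cond} plays a double role: it is both the sign condition guaranteeing $\alpha v+\beta rv'>0$ along the trajectory (via the maximum-principle argument above) and the discriminant condition making the Dulac divergence strictly negative. The most delicate technical point is the a priori positive lower bound $\phi\ge c_{0}>0$, which rules out super-fast decay $v(r)=o(r^{-2/(1-m)})$ along subsequences; this requires a careful barrier argument based on the integrated form of \eqref{ode}, balancing the contributions of $\int_{0}^{r}s^{n-1}v\,ds$ and $r^{n}v(r)$. Once that bound is in hand, the Bendixson--Dulac/Poincar\'e--Bendixson combination is essentially automatic.
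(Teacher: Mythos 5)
Your proposal takes a genuinely different route from the paper: a phase-plane analysis of the autonomous system for $(\psi,\phi)=(rv'/v,\;r^2v^{1-m})$ in $\tau=\log r$, rather than the paper's strategy of integral identities, l'H\^opital-type subsequence limits (Lemma \ref{sequence-limit-thm}) and an intermediate-value argument. The parts you actually compute are correct: I verified the planar system, the uniqueness of the interior equilibrium $(-2/(1-m),\,w_\infty)$, the Dulac divergence
$\partial_\psi(BF)+\partial_\phi(BG)=-\bigl(m\beta\psi^2+2m\alpha\psi+(n-2)\alpha\bigr)/\bigl(\phi(\alpha+\beta\psi)^2\bigr)$,
and the equivalence of the negative-discriminant condition $m\alpha<(n-2)\beta$ with \eqref{beta-lower-bd-cond}; your first-zero computation for $\alpha v+\beta rv'$ is also right and reproduces \eqref{basic-monotone-ineqn} of Lemma \ref{h1-lower-bd}. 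If completed, this would be an attractive and arguably more conceptual proof, and it makes transparent why \eqref{beta-lower-bd-cond} is the right hypothesis.

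However, there is a genuine gap exactly where you flag "a similar barrier argument": the a priori lower bound $\phi\ge c_0>0$. This is not a routine barrier step --- when $\alpha>n\beta$ the term $(\alpha-n\beta)r^{1-n}\int_0^r z^{n-1}v\,dz$ in the integrated equation has the wrong sign for a direct comparison, and ruling out $w(r)\to 0$ along subsequences is the entire content of the paper's Section 3 (Lemmas \ref{r^{n-2}-vm-infty-limit-thm} and \ref{w>epsilon-infinite-number-times-lem} together with the trichotomy of Lemma \ref{sequence-limit-thm}); the easy comparison only works for $\alpha\le n\beta$ (Lemma \ref{w-lower-bd-lem}). The gap matters for your argument because your Dulac weight $B=1/(\phi(\alpha+\beta\psi))$ is singular on $\{\phi=0\}$, so Bendixson--Dulac says nothing about $\omega$-limit sets touching that boundary: without $\phi\ge c_0$, Poincar\'e--Bendixson still allows the $\omega$-limit set to be the boundary equilibrium $(0,0)$, i.e.\ precisely the bad alternative $r^2v^{1-m}\to 0$. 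A fix within your framework that avoids the paper's Section 3 does exist: $(0,0)$ is a hyperbolic saddle of your system with eigenvalues $-(n-2)$ and $2$, and its local stable manifold is the invariant axis $\{\phi=0\}$, so a trajectory with $\phi>0$ for all $\tau$ can neither converge to $(0,0)$ nor close up a graphic through it (the only other equilibrium on the axis, $\psi=-(n-2)/m$, lies outside the invariant strip $-\alpha/\beta<\psi<0$). You should either carry out that local analysis or import the paper's Lemmas \ref{r^{n-2}-vm-infty-limit-thm}--\ref{w>epsilon-infinite-number-times-lem}; as written, the step you describe as delicate but doable is in fact the crux of the theorem and is missing.
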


\begin{rmk}
The function
\begin{equation}\label{v0-defn}
v_0(x)=\left(\frac{2(n-1)(n(1-m)-2)}{(1-m)(\alpha (1-m)-2\beta)|x|^2}\right)^{\frac{1}{1-m}}
\end{equation}
is a singular solution of \eqref{elliptic-eqn} in $\R^n\setminus\{0\}$. If $v$
is a solution of \eqref{elliptic-eqn}, then for any $\lambda>0$
the function 
\begin{equation}\label{v-lambda-defn}
v_{\lambda}(x)=\lambda^{\frac{2}{1-m}}v(\lambda x)
\end{equation}
is also a solution of \eqref{elliptic-eqn}. 
\end{rmk}

\begin{cor}\label{v-rescale-limit-cor}
Let $\rho$, $m$, $n$, $\alpha$, $\beta$, satisfy \eqref{m-relation}, 
\eqref{alpha-beta-relation}, \eqref{beta-lower-bd-cond}. Suppose $v$ is a radially 
symmetric solution of \eqref{elliptic-eqn}, and $v_0$, $v_{\lambda}$, are given by 
\eqref{v0-defn} and \eqref{v-lambda-defn} respectively.
Then $v_{\lambda}(x)$ converges uniformly on $\R^n\setminus B_R(0)$
to  $v_0(x)$ for any $R>0$ as $\lambda\to\infty$.
\end{cor}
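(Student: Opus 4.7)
The plan is to deduce the corollary directly from Theorem \ref{main-thm} by exploiting the self-similar scaling of the problem. The key algebraic observation is that
\[
r^2 v_\lambda(r)^{1-m}\;=\;r^2\lambda^2 v(\lambda r)^{1-m}\;=\;(\lambda r)^2\,v(\lambda r)^{1-m},
\]
so evaluating the rescaled quantity at $r$ amounts to evaluating the unscaled quantity $s^2 v(s)^{1-m}$ at the farther point $s=\lambda r$. Setting
\[
C:=\frac{2(n-1)(n(1-m)-2)}{(1-m)(\alpha (1-m)-2\beta)},
\]
Theorem \ref{main-thm} gives $s^2 v(s)^{1-m}\to C$ as $s\to\infty$.

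Next I would upgrade this asymptotic into uniform convergence on $\{r\ge R\}$. Given $\varepsilon>0$, Theorem \ref{main-thm} provides an $M=M(\varepsilon)>0$ such that $|s^2 v(s)^{1-m}-C|<\varepsilon$ for every $s\ge M$; hence whenever $\lambda\ge M/R$ and $r\ge R$, one has $\lambda r\ge M$, so that
\[
\bigl|\,r^2 v_\lambda(r)^{1-m}-C\,\bigr|<\varepsilon
\]
uniformly in $r\ge R$. To convert this into a statement about $v_\lambda-v_0$, I would write
\[
v_\lambda(r)-v_0(r)\;=\;r^{-\frac{2}{1-m}}\Bigl[\bigl(r^2 v_\lambda(r)^{1-m}\bigr)^{\frac{1}{1-m}}-C^{\frac{1}{1-m}}\Bigr],
\]
use the fact that $r^{-2/(1-m)}\le R^{-2/(1-m)}$ on $[R,\infty)$, and invoke continuity of $y\mapsto y^{1/(1-m)}$ at $y=C>0$ to conclude uniform convergence $v_\lambda\to v_0$ on $[R,\infty)$. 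Because both $v_\lambda$ and $v_0$ are radially symmetric, this is the same as uniform convergence on $\mathbb{R}^n\setminus B_R(0)$.

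\textbf{Main obstacle.} Once Theorem \ref{main-thm} is in hand, this corollary is essentially a reformulation of the exact decay rate in the language of self-similar rescalings, and there is no substantial difficulty. The only point requiring any care is passing from a pointwise asymptotic at infinity to uniform convergence on $[R,\infty)$; this is immediate from the observation that under the rescaling $r\mapsto \lambda r$ the image $\{\lambda r:r\ge R\}=[\lambda R,\infty)$ escapes to infinity uniformly in $r$ as $\lambda\to\infty$, so the tail bound on $s^2 v(s)^{1-m}-C$ transfers without loss to a uniform bound in $r$.
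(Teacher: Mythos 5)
Your proposal is correct and follows essentially the same route as the paper: the paper's one-line proof rests on exactly the identity $|x|^2v_{\lambda}(x)^{1-m}=(\lambda|x|)^2v(\lambda x)^{1-m}$ together with Theorem \ref{main-thm}, and your additional details (the tail bound for $\lambda r\ge\lambda R\to\infty$ and the passage from $r^2v_\lambda^{1-m}\to C$ to $v_\lambda\to v_0$ via $r^{-2/(1-m)}\le R^{-2/(1-m)}$ and continuity of $y\mapsto y^{1/(1-m)}$ at $C>0$) simply make explicit what the paper leaves implicit.
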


\begin{cor}\label{metric-decay-rate-cor}(cf. \cite{H2})
The metric $g_{ij}=v^{\frac{4}{n+2}}dx^2$, $n\ge 3$, of a locally conformally flat 
non-compact gradient shrinking Yamabe soliton where $v$ is radially symmetric and
satisfies \eqref{elliptic-eqn} with $m=\frac{n-2}{n+2}$, and $\beta>\frac{\rho}{2}>0$, 
$\alpha$, satisfying \eqref{alpha-beta-relation} has the exact decay rate 
\eqref{decay-rate-soliton}.  
\end{cor}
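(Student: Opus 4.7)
The plan is to obtain Corollary \ref{metric-decay-rate-cor} as an immediate specialization of Theorem \ref{main-thm} to the conformal exponent $m=\frac{n-2}{n+2}$ characteristic of locally conformally flat Yamabe solitons in dimension $n$. There is essentially nothing to prove beyond (i) checking that the hypotheses of the corollary imply those of the theorem, and (ii) simplifying the constant on the right-hand side of \eqref{decay-rate} to recover the constant in \eqref{decay-rate-soliton}.

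First I would verify that the corollary's assumption $\beta>\rho/2$ is exactly the hypothesis \eqref{beta-lower-bd-cond} when $m=\frac{n-2}{n+2}$. A short computation gives
$$n-2-mn=(n-2)\left(1-\tfrac{n}{n+2}\right)=\tfrac{2(n-2)}{n+2},\qquad m\rho=\tfrac{(n-2)\rho}{n+2},$$
so $\frac{m\rho}{n-2-mn}=\frac{\rho}{2}$. The relation \eqref{alpha-beta-relation} is assumed in both statements, and \eqref{m-relation} is immediate since $\frac{n-2}{n+2}<\frac{n-2}{n}$. Thus every hypothesis of Theorem \ref{main-thm} is in force.

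Next I would substitute $1-m=\frac{4}{n+2}$ into \eqref{decay-rate}. Using \eqref{alpha-beta-relation} one has $\alpha(1-m)=2\beta+\rho$, so $\alpha(1-m)-2\beta=\rho$. Also $n(1-m)-2=\frac{4n}{n+2}-2=\frac{2(n-2)}{n+2}$. Inserting these identities into the right-hand side of \eqref{decay-rate} yields
$$\frac{2(n-1)\bigl(n(1-m)-2\bigr)}{(1-m)\bigl(\alpha(1-m)-2\beta\bigr)}=\frac{2(n-1)\cdot\frac{2(n-2)}{n+2}}{\frac{4}{n+2}\cdot\rho}=\frac{(n-1)(n-2)}{\rho}.$$
Therefore Theorem \ref{main-thm} gives $\lim_{r\to\infty}r^{2}v(r)^{4/(n+2)}=\frac{(n-1)(n-2)}{\rho}$, which is precisely \eqref{decay-rate-soliton} read as a statement about the conformal factor $v^{4/(n+2)}$ of the metric $g_{ij}=v^{4/(n+2)}\,dx^2$. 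Geometrically, this says that the soliton metric is asymptotic to $\frac{(n-1)(n-2)}{\rho\,r^{2}}\,dx^{2}$ at infinity.

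There is no real obstacle: once Theorem \ref{main-thm} has been established, the corollary is pure arithmetic together with the identification of the Yamabe exponent. The only point requiring a line of commentary is that \eqref{decay-rate} controls $r^{2}v^{1-m}$, and for the Yamabe value $m=\frac{n-2}{n+2}$ this quantity is precisely $r^{2}$ times the conformal factor, which is the geometrically natural object describing the decay of the soliton metric.
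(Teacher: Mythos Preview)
Your proposal is correct and is exactly the intended argument: the paper states Corollary~\ref{metric-decay-rate-cor} without proof, as an immediate specialization of Theorem~\ref{main-thm} to $m=\frac{n-2}{n+2}$, and your verification that \eqref{beta-lower-bd-cond} reduces to $\beta>\rho/2$ together with the arithmetic reducing the constant in \eqref{decay-rate} to $(n-1)(n-2)/\rho$ is precisely what is needed. Your remark that \eqref{decay-rate-soliton} should be read as a statement about the conformal factor $v^{4/(n+2)}$ (i.e., about the metric itself) is the right way to reconcile the notation.
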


Since the scalar curvature of the metric $g_{ij}=v^{\frac{4}{n+2}}dx^2$, $n\ge 3$, where
$v$ is a radially symmetric solution of \eqref{elliptic-eqn} with $m=\frac{n-2}{n+2}$
is given by (\cite{DS2}, \cite{H2}),
$$
R(r)=(1-m)\left(\alpha +\beta\frac{rv'(r)}{v(r)}\right),
$$
by Corollary \ref{metric-decay-rate-cor} and an argument similar to the proof of 
Lemma 3.4 and Theorem 1.3 of \cite{H2} we obtain the following extensions of Theorem 
1.2 and Theorem 1.3 of \cite{H2}.

\begin{thm}
Let $m=\frac{n-2}{n+2}$, $n\ge 3$, $\beta>\frac{\rho}{2}>0$, $\alpha$, 
satisfy \eqref{alpha-beta-relation}. Let $v$ be a radially symmetric solution of 
\eqref{elliptic-eqn}. Then 
\begin{equation}\label{rv'/v-limit-1}
\lim_{r\to\infty}\frac{rv'(r)}{v(r)}=-\frac{2}{1-m}
\end{equation}
and the scalar curvature $R(r)$ of the metric $g_{ij}=v^{\frac{4}{n+2}}dx^2$ satisfies
$$
\lim_{r\to\infty}R(r)=\rho.
$$
If $K_0$ and $K_1$ are the sectional curvatures of the $2$-planes perpendicular to and 
tangent to the spheres $\{x\}\times S^{n-1}$ respectively, then
\begin{equation*}
\lim_{r\to\infty}K_0(r)=0,
\end{equation*}
and
\begin{equation*}
\lim_{r\to\infty}K_1(r)=\frac{\rho}{(n-1)(n-2)}.
\end{equation*}
\end{thm}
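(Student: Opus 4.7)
The plan is to obtain the three assertions of the theorem in sequence, using the exact decay rate of Theorem \ref{main-thm} (equivalently, the uniform rescaling convergence in Corollary \ref{v-rescale-limit-cor}) as the sole analytic input beyond what is already available in \cite{H2}. All three limits will ultimately be easy consequences of \eqref{rv'/v-limit-1}, so the real work lies in establishing that first limit.

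To prove \eqref{rv'/v-limit-1}, I would run a rescaling argument. Setting $\psi(r)=rv'(r)/v(r)$ and recalling the scaling $v_\lambda(x)=\lambda^{2/(1-m)}v(\lambda x)$ of \eqref{v-lambda-defn}, a direct chain-rule computation shows that $\psi(\lambda)=v_\lambda'(1)/v_\lambda(1)$. By Corollary \ref{v-rescale-limit-cor}, $v_\lambda\to v_0$ uniformly on the annulus $\{1/2\le|x|\le 2\}$, and there $v_\lambda$ is uniformly bounded and bounded away from $0$. Plugging this into the ODE \eqref{ode} written for $v_\lambda$ yields a uniform bound on $(v_\lambda^m)''$, hence on $v_\lambda'$ and $v_\lambda''$ there; Arzel\`a--Ascoli and uniqueness of the limit $v_0$ then upgrade the convergence to $C^1$, so $v_\lambda'(1)\to v_0'(1)$. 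Since \eqref{v0-defn} yields $v_0'(1)/v_0(1)=-2/(1-m)$, we conclude $\psi(\lambda)\to-2/(1-m)$ as $\lambda\to\infty$, which is \eqref{rv'/v-limit-1}. The scalar curvature formula $R(r)=(1-m)(\alpha+\beta\psi(r))$ then gives
\[
\lim_{r\to\infty}R(r)=(1-m)\alpha-2\beta=(2\beta+\rho)-2\beta=\rho
\]
by \eqref{alpha-beta-relation}.

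For the sectional curvatures I would rewrite the conformally flat metric in warped-product form. Introducing the arclength $s$ by $ds=v^{2/(n+2)}\,dr$ and setting $\widetilde\rho(s)=rv(r)^{2/(n+2)}$, the metric becomes $g=ds^2+\widetilde\rho(s)^2\,d\Omega_{n-1}^2$, so the standard formulas read $K_0=-\widetilde\rho''(s)/\widetilde\rho(s)$ and $K_1=(1-\widetilde\rho'(s)^2)/\widetilde\rho(s)^2$. A direct computation gives $\widetilde\rho'(s)=1+\frac{2}{n+2}\psi(r)$, which tends to $0$ by \eqref{rv'/v-limit-1} (since $-2/(1-m)=-(n+2)/2$ when $m=(n-2)/(n+2)$), while $\widetilde\rho(s)^2=r^2v(r)^{4/(n+2)}\to(n-1)(n-2)/\rho$ by Theorem \ref{main-thm}. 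Hence $K_1\to\rho/((n-1)(n-2))$. Finally, for a rotationally symmetric warped product in dimension $n$ the scalar curvature decomposes as $R=2(n-1)K_0+(n-1)(n-2)K_1$; with $R\to\rho$ and $K_1\to\rho/((n-1)(n-2))$ already known, this forces $K_0\to0$.

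The main technical obstacle is the $C^1$-upgrade in the rescaling step, i.e.\ extracting convergence of $v_\lambda'$ from the mere uniform convergence guaranteed by Corollary \ref{v-rescale-limit-cor}. This is precisely the kind of bootstrap that the paper attributes to ``an argument similar to the proof of Lemma 3.4 and Theorem 1.3 of \cite{H2}'': the ODE \eqref{ode} lets one control higher derivatives of $v_\lambda$ pointwise in terms of lower-order quantities that are already uniformly bounded on annuli bounded away from the origin, and Arzel\`a--Ascoli closes the argument. Once this is in place, the remainder of the proof is algebraic.
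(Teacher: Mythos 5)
Your proposal is correct, and it is worth noting that the paper itself gives no proof of this theorem: it merely asserts that the statement follows from Corollary \ref{metric-decay-rate-cor} ``and an argument similar to the proof of Lemma 3.4 and Theorem 1.3 of \cite{H2}.'' Judging from the arguments the paper does write out (Lemmas \ref{r^{n-2}-vm-infty-limit-thm} and \ref{w>epsilon-infinite-number-times-lem}), the intended route to \eqref{rv'/v-limit-1} is the integral identity \eqref{v'-eqn} in the form \eqref{rv'/v-identity}, i.e.\ $-(n-1)\frac{rv'}{v}=\beta r^2v^{1-m}+\frac{\alpha-n\beta}{r^{n-2}v^m}\int_0^r z^{n-1}v\,dz$, whose terms are evaluated along subsequences by l'H\^opital once the decay rate \eqref{decay-rate} is known. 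Your rescaling--compactness argument ($\psi(\lambda)=v_\lambda'(1)/v_\lambda(1)$, uniform convergence $v_\lambda\to v_0$ from Corollary \ref{v-rescale-limit-cor}, an ODE bootstrap plus Arzel\`a--Ascoli to upgrade to $C^1$, and $v_0'(1)/v_0(1)=-2/(1-m)$) is a genuinely different and equally valid route; it avoids the case distinction $v\in L^1$ versus $v\notin L^1$ and the delicate l'H\^opital bookkeeping, at the cost of the compactness machinery. Your treatment of the curvatures is also clean: the warped-product formulas with $\widetilde\rho'(s)=1+\frac{2}{n+2}\frac{rv'}{v}\to 0$ and $\widetilde\rho^2\to(n-1)(n-2)/\rho$ give $K_1$, and deducing $K_0\to 0$ from $R=2(n-1)K_0+(n-1)(n-2)K_1$ neatly sidesteps having to control $\widetilde\rho''(s)$ directly.

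Two small points to tighten. First, the order of the derivative bounds is reversed as written: the ODE \eqref{ode} contains $\beta r v_\lambda'$ on the right, so it cannot by itself bound $(v_\lambda^m)''$; you should first bound $v_\lambda'$ on the annulus via the integrated equation \eqref{v'-eqn} (or via $0>rv'/v>-\alpha/\beta$ from Lemma \ref{h1-lower-bd}, whose hypothesis $m\alpha/\beta<n-2$ is exactly \eqref{beta-lower-bd-cond}, which under $m=\frac{n-2}{n+2}$ is $\beta>\rho/2$), and only then use the ODE to bound $(v_\lambda^m)''$ and hence $v_\lambda''$. Second, you should say explicitly that every subsequential $C^1$-limit of $v_\lambda$ on the annulus must equal $v_0$ because the $C^0$-limit is already identified, so the full family $v_\lambda'(1)$ converges to $v_0'(1)$. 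With these adjustments the argument is complete.
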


\begin{cor}
Let $\eta>0$, $\rho>0$, $m$, $n$, $\alpha$, $\beta$, satisfy
\eqref{m-relation}, \eqref{alpha-beta-relation}, and
\eqref{beta-lower-bd-cond}.
Suppose $v$ is a solution of \eqref{ode}, \eqref{initial-cond}. Then
\eqref{rv'/v-limit-1} holds.
\end{cor}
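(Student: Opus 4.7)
The plan is to deduce the corollary from Theorem~\ref{main-thm} by combining an explicit scaling identity with a standard elliptic regularity argument. Write $C=\frac{2(n-1)(n(1-m)-2)}{(1-m)(\alpha(1-m)-2\beta)}$ for the constant in \eqref{decay-rate}. By Theorem~\ref{main-thm} (equivalently Corollary~\ref{v-rescale-limit-cor}) the rescalings $v_\lambda(x)=\lambda^{2/(1-m)}v(\lambda x)$, which are themselves radially symmetric solutions of \eqref{elliptic-eqn}, converge to the explicit singular profile $v_0(x)=(C/|x|^2)^{1/(1-m)}$ uniformly on $\R^n\setminus B_R(0)$ for every $R>0$. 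A direct differentiation gives, for radial $v$,
\begin{equation*}
\frac{r v_\lambda'(r)}{v_\lambda(r)}=\frac{(\lambda r)\,v'(\lambda r)}{v(\lambda r)},
\end{equation*}
and since $rv_0'(r)/v_0(r)\equiv -2/(1-m)$, evaluating the identity at $r=1$ reduces the conclusion \eqref{rv'/v-limit-1} to showing $v_\lambda'(1)/v_\lambda(1)\to v_0'(1)/v_0(1)$ as $\lambda\to\infty$.

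The main analytic step is to upgrade the $C^0$ convergence $v_\lambda\to v_0$ to $C^1$ convergence on compact subsets of $\R^n\setminus\{0\}$. Fix an annulus $A=\{x:\tfrac12\le|x|\le 2\}$; since $v_0>0$ on $A$ and $v_\lambda\to v_0$ uniformly on $A$, we have $c\le v_\lambda\le c^{-1}$ on $A$ for some $c>0$ and all large $\lambda$. Rewriting \eqref{elliptic-eqn} in divergence form as $(n-1)\operatorname{div}(v_\lambda^{m-1}\nabla v_\lambda)+\alpha v_\lambda+\beta x\cdot\nabla v_\lambda=0$, the coefficient $v_\lambda^{m-1}$ is bounded between two positive constants on $A$, so the equation is uniformly elliptic with smooth, uniformly bounded structure. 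Standard interior $W^{2,p}$/Schauder estimates therefore yield $\|v_\lambda\|_{C^{1,\alpha}(A')}\le K$ on a slightly smaller annulus $A'$, uniformly in $\lambda$. Arzel\`a--Ascoli plus uniqueness of the $C^0$ limit then forces $v_\lambda\to v_0$ in $C^1(A')$.

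Combining the two steps, $v_\lambda'(1)/v_\lambda(1)\to v_0'(1)/v_0(1)=-2/(1-m)$, and the scaling identity of the first paragraph yields $\lim_{r\to\infty}rv'(r)/v(r)=-2/(1-m)$, proving \eqref{rv'/v-limit-1}. The only ingredient beyond Theorem~\ref{main-thm} is the $C^0\to C^1$ upgrade of the second paragraph, and this is where I expect the most care is needed; once one notes that the uniform positive lower bound for $v_\lambda$ on $A$ makes the operator non-degenerate, however, the regularity argument is routine, so the proof is essentially a consequence of Theorem~\ref{main-thm} packaged through the scaling relation.
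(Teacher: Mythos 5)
Your argument is correct, but it takes a genuinely different route from the paper's. The paper obtains \eqref{rv'/v-limit-1} from Theorem \ref{main-thm} by a purely one-dimensional computation (it cites the analogue of Lemma 3.4 of \cite{H2}): dividing the integrated ODE identity \eqref{v'-eqn} by $r^{n-1}v(r)$ gives
$$-(n-1)\frac{rv'(r)}{v(r)}=\beta r^2v(r)^{1-m}+\frac{\alpha-n\beta}{r^{n-2}v(r)^m}\int_0^rz^{n-1}v(z)\,dz,$$
and the decay rate \eqref{decay-rate} together with l'Hospital's rule evaluates both terms on the right, yielding $-(n-1)\lim_{r\to\infty}rv'/v=w_{\infty}\frac{\alpha(1-m)-2\beta}{n(1-m)-2}=\frac{2(n-1)}{1-m}$. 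You instead pass through the blow-downs $v_\lambda$, upgrade the $C^0$ convergence $v_\lambda\to v_0$ of Corollary \ref{v-rescale-limit-cor} to $C^1$ convergence on an annulus by elliptic regularity, and read off the logarithmic derivative at $|x|=1$ via the exact scaling identity $v_\lambda'(1)/v_\lambda(1)=\lambda v'(\lambda)/v(\lambda)$. This works: on the annulus $v_\lambda$ is pinched between positive constants for large $\lambda$, so the equation is uniformly elliptic there, and a De Giorgi--Nash--Moser/Schauder (or $W^{2,p}$) bootstrap gives uniform $C^{1,\alpha}$ bounds, after which Arzel\`a--Ascoli and uniqueness of the $C^0$ limit force full-family $C^1$ convergence. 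Two remarks: first, be a little careful with the regularity step as stated --- the divergence-form coefficient $v_\lambda^{m-1}$ is only known a priori to be bounded, not uniformly H\"older, so one should first get uniform $C^{0,\alpha}$ (or run the $W^{2,p}$ bootstrap on $u=v_\lambda^m$) before invoking Schauder; this is standard but worth saying. Second, your method has the virtue of not using radial symmetry beyond Theorem \ref{main-thm} itself, whereas the paper's route is more elementary and avoids all compactness machinery because the radial identity \eqref{v'-eqn} already encodes the derivative information explicitly.
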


The plan of the paper is as follows. We will prove the boundedness of 
the function
\begin{equation}\label{w-defn}
w(r)=r^2v(r)^{1-m}
\end{equation}
where $v$ is the solution of \eqref{elliptic-eqn} in section two. We will also
find the lower bound of $w$ in section two. In section
three we will prove Theorem \ref{main-thm} and Corollary \ref{v-rescale-limit-cor}. 
We will assume that \eqref{m-relation}, \eqref{alpha-beta-relation}, hold 
for some constant $\rho>0$ and let $v$ be a radially symmetric solution of 
\eqref{elliptic-eqn} or equivalently the solution of \eqref{ode}, 
\eqref{initial-cond}, for some $\eta>0$, 
and 
$$
w_{\infty}=\frac{2(n-1)(n(1-m)-2)}{(1-m)(\alpha (1-m)-2\beta)}
$$
for the rest of the paper. Note that when $\alpha=n\beta$ and 
$\alpha=\frac{2\beta+1}{1-m}$, the solution of 
\eqref{elliptic-eqn} is given explicitly by (cf. \cite{DS2})
$$
v_{\lambda}(x)=\left(\frac{2(n-1)(n-2-nm)}{(1-m)(\lambda^2+|x|^2)}\right)^{\frac{1}{1-m}},
\quad\lambda>0,
$$
which satisfies \eqref{decay-rate}.

\section{$L^{\infty}$ estimate of $w$}
\setcounter{equation}{0}
\setcounter{thm}{0}

\begin{lem}\label{sequence-limit-thm}
Let $\rho>0$, $m$, $n$, $\alpha$, $\beta$, satisfy \eqref{m-relation}
and \eqref{alpha-beta-relation} and let $v$ be a radially symmetric solution of 
\eqref{elliptic-eqn}. Let $w$ be given by \eqref{w-defn}. 
Suppose there exists a constant $C_1>0$ such that
\begin{equation}\label{w-upper-bd}
w(r)\le C_1\quad\forall r\ge 1.
\end{equation}
Then any sequence $\{w(r_i)\}_{i=1}^{\infty}$, $r_i\to\infty$ as 
$i\to\infty$, has a subsequence $\{w(r_i')\}_{i=1}^{\infty}$ such that
\begin{equation}\label{sequence-limit}
\lim_{r\to\infty}w(r_i')=\left\{\begin{aligned}
&0\quad\mbox{ or }\quad w_{\infty}\quad\mbox{ if }\,\,v\not\in L^1(\R^n)\\
&0\quad\mbox{ or }\quad w_1\quad\mbox{ if }\,\,v\in L^1(\R^n)
\quad\mbox{ and }\,\,\beta>0\\
&0\qquad\qquad\quad\mbox{ if }\,\,v\in L^1(\R^n)\quad\mbox{ and }\,\,\beta\le 0\\
\end{aligned}\right.
\end{equation}
where
\begin{equation}\label{w1-defn}
w_1=\frac{2(n-1)}{(1-m)\beta}\quad\mbox{ if }\,\,\beta>0.
\end{equation}
\end{lem}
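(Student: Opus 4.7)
\proof

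My plan is to combine a first-integral of the ODE with a rescaling argument, exploiting the auxiliary observation that the set $\mathcal{L}$ of subsequential limit values of $w$ at infinity is a connected interval (as an intersection of nested closed intervals in the image of a continuous function), so that the lemma is equivalent to showing $w$ converges to a value in $\{0, w_\infty, w_1\}$.

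First I would derive the first-integral. Multiplying \eqref{ode} by $r^{n-1}$, using the identity $\alpha s^{n-1} v + \beta s^n v' = (\beta s^n v)' + (\alpha - \beta n) s^{n-1} v$ together with $v'(0)=0$, integration from $0$ to $r$ yields
\begin{equation*}
\frac{n-1}{m}\,r^{n-1}(v^m)'(r) + \beta r^n v(r) + (\alpha - \beta n)\int_0^r s^{n-1} v(s)\,ds = 0,
\end{equation*}
which, after dividing by $r^n v(r)$ and using $v^{m-1} = r^2/w$, becomes the algebraic identity
\begin{equation*}
(n-1)\,\frac{p(r)}{w(r)} + \beta + (\alpha - \beta n)\, Q(r) = 0,
\end{equation*}
with $p := rv'/v$ and $Q(r) := \int_0^r s^{n-1}v(s)\,ds/(r^n v(r))$. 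The bound $w \le C_1$ and the hypothesis $n(1-m) > 2$ give $\int_0^r s^{n-1} v \le (C_1^{1/(1-m)}/(n - 2/(1-m)))\,r^{n-2/(1-m)}$, which combined with $r^n v(r) = w(r)^{1/(1-m)} r^{n-2/(1-m)}$ implies that along any sequence $r_i \to \infty$ with $w(r_i) \to L > 0$ the quantity $Q(r_i)$ stays bounded; passing to a further subsequence, $Q(r_i) \to q_*$ and $p(r_i) \to p_*$ with $(n-1) p_*/L + \beta + (\alpha - \beta n) q_* = 0$. A second relation is needed to pin down $L$.

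For this I would rescale: setting $v_i(x) := r_i^{2/(1-m)} v(r_i x)$ (again a solution of \eqref{elliptic-eqn}) and noting $w_i(r) = w(r_i r) \le C_1$ for $r \ge 1/r_i$, elliptic/ODE compactness gives $v_i \to v_*$ in $C^1_{\rm loc}(\R^n \setminus \{0\})$ along a subsequence, where $v_* > 0$ is a radial solution of \eqref{elliptic-eqn} on $(0, \infty)$ with $w_*(1) = L$ and $w_*$ globally bounded. Dominated convergence (the integrand is integrable at $0$ since $n - 1 - 2/(1-m) > -1$) shows the algebraic identity above passes to the limit and holds for $v_*$ at every $r > 0$. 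Setting $s = \log r$, the pair $(w_*(s), p_*(s))$ is a bounded global-in-$s$ trajectory of the autonomous planar system
\begin{equation*}
\dot w = w[2 + (1-m)p], \qquad \dot p = -(n-2)p - mp^2 - \frac{w(\alpha + \beta p)}{n-1},
\end{equation*}
whose only equilibria are $(0, 0)$, $(0, -(n-2)/m)$, and $(w_\infty, -2/(1-m))$. I aim to show this trajectory must be stationary at $(w_\infty, -2/(1-m))$ in the case $v \notin L^1$, so that $v_*(r) \equiv (w_\infty/r^2)^{1/(1-m)}$ and therefore $L = w_\infty$. In the case $v \in L^1$ the tail $\int_0^\infty s^{n-1} v_* < \infty$ changes the asymptotics of $Q_*$ so that the algebraic identity reduces eventually to $(n-1) p_*/w_* = -\beta$, which yields $L = w_1$ when $\beta > 0$ and excludes $L > 0$ when $\beta \le 0$.

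The main obstacle is the classification step for $v_*$: ruling out non-stationary bounded trajectories (periodic orbits and heteroclinic connections between equilibria) of the planar system. I expect a Dulac-type argument with multiplier $1/w$ -- whose divergence is $-(n - 2 + 2mp)/w - \beta/(n-1)$, of definite sign when $\beta > 0$ in the region $p > -(n-2)/(2m)$ -- to rule out periodic orbits, combined with the sign information at critical points of $w$: $\dot w = 0$ forces $p = -2/(1-m)$, whence $\ddot w = (1-m) w\,\dot p|_{p = -2/(1-m)}$ shows that local maxima of $w$ lie at or above $w_\infty$ and local minima at or below, a strong constraint on any oscillatory behavior. Together with the connectedness of $\mathcal{L}$, these ingredients should pin down $L$.

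\endproof
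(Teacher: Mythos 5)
Your first integral is exactly the paper's starting point (its equation \eqref{v'-eqn}), and your autonomous $(w,p)$ system and its equilibria are computed correctly; but the proof as proposed has a genuine gap at precisely the step you flag as "the main obstacle". The blow-down/Poincar\'e--Bendixson route requires you to show that every bounded full trajectory arising in the $\omega$-limit set of the original orbit is an equilibrium, and your sketch does not do this. You cannot rule out heteroclinic connections wholesale, since the solution $v$ itself is a non-stationary bounded semi-orbit of this very system (emanating from $(0,0)$ as $s\to-\infty$); you would have to exclude connecting orbits and cycles \emph{inside the $\omega$-limit set}, which is the hard part. Moreover the Dulac multiplier $1/w$ gives a definite-sign divergence only when $\beta>0$ and $p>-(n-2)/(2m)$, whereas the target equilibrium sits at $p=-2/(1-m)$, which lies outside that half-plane whenever $m>\frac{n-2}{n+2}$ (allowed here, since the lemma only assumes $m<\frac{n-2}{n}$), and the lemma must also cover $\beta\le 0$. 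So the classification you "expect" is not established on the lemma's full hypothesis set, and without it neither the value $L=w_\infty$ nor the $L^1$ dichotomy follows.

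The missing "second relation" is obtained in the paper by a much more elementary device: since $w\le C_1$ forces $v^m(r)\to 0$, one can integrate \eqref{v'-eqn} once more over $(r,\infty)$ to write $\frac{n-1}{m}v(r)^m$ as a convergent tail integral, and then evaluate along the subsequence with l'Hospital's rule. This produces the closed algebraic equation $\frac{n-1}{m}b^m=\frac{(1-m)[\alpha(1-m)-2\beta]}{2m[n(1-m)-2]}\,b$ when $v\notin L^1(\R^n)$ (and $\frac{n-1}{m}b^m=\frac{(1-m)\beta}{2m}b$ when $v\in L^1(\R^n)$), whose only nonnegative roots give $a=0$ or $a=w_\infty$ (resp. $w_1$, or only $0$ if $\beta\le0$). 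Note also that the lemma asserts only a classification of subsequential limits, not convergence of $w$, so your reduction via connectedness of the limit set, while not incorrect, commits you to proving a strictly stronger statement than is needed. I would recommend replacing the dynamical-systems step by the second integration and l'Hospital computation.
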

\begin{proof}
Let $\{r_i\}_{i=1}^{\infty}$ be a sequence such that $r_i\to\infty$ as 
$i\to\infty$. By \eqref{w-upper-bd} the sequence $\{w(r_i)\}_{i=1}^{\infty}$
has a subsequence which we may assume without loss of generality to be the 
sequence itself that converges to some constant $a\in [0,C_1]$ as $i\to\infty$.
Integrating \eqref{ode} over $(0,r)$ and simplifying,
\begin{equation}\label{v'-eqn}
-\frac{n-1}{m}(v^m)'(r)=\beta rv(r)
+\frac{\alpha-n\beta}{r^{n-1}}\int_0^rz^{n-1}v(z)\,dz\quad\forall r>0.
\end{equation}
Integrating \eqref{v'-eqn} over $(r,\infty)$, by \eqref{w-upper-bd}
we get
\begin{equation}\label{v-integral-eqn}
\frac{n-1}{m}v(r)^m=\beta\int_r^{\infty}s v(s)\,ds
+\int_r^{\infty}\frac{\alpha-n\beta}{s^{n-1}}
\left(\int_0^sz^{n-1}v(z)\,dz\right)\,d s\quad\forall r>0.
\end{equation}
Let $b=a^{\frac{1}{1-m}}=\lim_{i\to\infty}r_i^{\frac{2}{1-m}}v(r_i)$. Then by 
\eqref{w-upper-bd}, \eqref{v-integral-eqn}, and the l'Hospital rule,

\begin{align}\label{limit-eqn1}
\frac{(n-1)}{m}b^m=&\frac{(n-1)}{m}\lim_{i\to\infty}(r_i^{\frac{2}{1-m}}v(r))^m\notag\\
=&\beta\lim_{i\to\infty}\frac{\int_{r_i}^{\infty}sv(s)\,ds}{r_i^{-\frac{2m}{1-m}}}
+\lim_{i\to\infty}\frac{\int_{r_i}^{\infty}\frac{\alpha-n\beta}{s^{n-1}}
\left(\int_0^sz^{n-1}v(z)\,dz\right)\,ds}{r_i^{-\frac{2m}{1-m}}}\notag\\
=&\frac{(1-m)}{2m}\left(\beta\lim_{i\to\infty}\frac{r_iv(r_i)}{r_i^{-\frac{2m}{1-m}-1}}
+(\alpha-n\beta)\lim_{i\to\infty}\frac{\frac{1}{r_i^{n-1}}\int_0^{r_i}z^{n-1}v(z)\,dz}
{r_i^{-\frac{2m}{1-m}-1}}\right)\notag\\
=&\frac{(1-m)}{2m}\left(\beta b
+(\alpha-n\beta)\lim_{i\to\infty}\frac{\int_0^{r_i}z^{n-1}v(z)\,dz}
{r_i^{n-\frac{2}{1-m}}}\right)
\end{align}
We now divide the proof into two cases.

\noindent{\bf Case 1}: $v\not\in L^1(\R^n)$.

\noindent By \eqref{limit-eqn1} and the l'Hospital rule,

\begin{align}\label{limit1}
\frac{(n-1)}{m}b^m=&\frac{(1-m)}{2m}\left(\beta b
+\frac{\alpha-n\beta}{n-\frac{2}{1-m}}\cdot
\lim_{i\to\infty}\frac{r_i^{n-1}v(r_i)}{r_i^{n-\frac{2}{1-m}-1}}\right)\notag\\
=&\frac{(1-m)}{2m}\left(\beta b
+\frac{\alpha-n\beta}{n-\frac{2}{1-m}}b\right)\notag\\
=&\frac{(1-m)[\alpha (1-m)-2\beta]}{2m[n(1-m)-2]}b\notag\\
\Rightarrow\qquad a=b=0\quad\mbox{ or }\quad &a=b^{1-m}
=w_{\infty}.
\end{align}

\noindent{\bf Case2}: $v\in L^1(\R^n)$.

\noindent By \eqref{limit-eqn1},

\begin{equation}\label{limit2}
\frac{(n-1)}{m}b^m=\frac{(1-m)\beta}{2m}b\quad
\Rightarrow\quad\left\{\begin{aligned} 
&a=b=0\quad\mbox{ or }\quad a=b^{1-m}=w_1\quad\mbox{ if }\beta>0\\
&a=b=0\qquad\qquad\qquad\qquad\qquad\mbox{ if }\beta\le 0
\end{aligned}\right.
\end{equation}
By \eqref{limit1} and \eqref{limit2} the lemma follows.

\end{proof}

\begin{rmk}\label{w1-w-infty-compare-rmk}
When $\beta>0$, $w_1>w_{\infty}$ if and only if $\alpha>n\beta$.
\end{rmk}

\begin{cor}\label{decay-rate-cor}
Suppose there exist constants $C_1>C_2>0$ such that
$$
C_2\le w(r)\le C_1\quad\forall r\ge 1.
$$
Then \eqref{decay-rate} holds.
\end{cor}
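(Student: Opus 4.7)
The plan is to use Lemma \ref{sequence-limit-thm} to pin down the limit of $w$ along arbitrary subsequences and then show that, under the two-sided bound, only the value $w_\infty$ is possible. The positive lower bound $w(r) \geq C_2$ will be used to rule out both the trivial limit $0$ and the $L^1$ alternative $w_1$.

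First, I would observe that the hypothesis $w(r) \geq C_2$ on $[1,\infty)$ translates into $v(r) \geq C_2^{1/(1-m)} r^{-2/(1-m)}$ for all $r\ge 1$. Combined with \eqref{m-relation}, which gives $n(1-m) > 2$, i.e.\ $2/(1-m) < n$, this forces
\begin{equation*}
\int_1^{\infty} r^{n-1} v(r)\,dr \;\geq\; C_2^{1/(1-m)}\int_1^{\infty} r^{n-1-\frac{2}{1-m}}\,dr \;=\; +\infty,
\end{equation*}
so $v \notin L^1(\mathbb{R}^n)$. This places us squarely in Case 1 of Lemma \ref{sequence-limit-thm}, eliminating the ambiguity with $w_1$ that appears in the $L^1$ case.

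Next, take any sequence $r_i \to \infty$. Since $C_2 \leq w(r_i) \leq C_1$, the sequence $\{w(r_i)\}$ is bounded, and Lemma \ref{sequence-limit-thm} (applied in Case 1) produces a subsequence $\{w(r_i')\}$ converging either to $0$ or to $w_\infty$. The lower bound $w(r_i') \geq C_2 > 0$ rules out the value $0$, so every subsequential limit equals $w_\infty$. A standard compactness argument then gives $\lim_{r\to\infty} w(r) = w_\infty$, which is exactly \eqref{decay-rate}.

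No step here is genuinely hard; the only care needed is the integrability check that places us in Case 1. All the real content has been absorbed into Lemma \ref{sequence-limit-thm}, so this corollary is essentially a clean-up that converts subsequential convergence to full convergence by invoking the two-sided bound.
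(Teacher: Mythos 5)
Your proof is correct and follows the route the paper evidently intends (the paper states this corollary without proof, as an immediate consequence of Lemma \ref{sequence-limit-thm}). In particular, your observation that $w\ge C_2$ together with $2/(1-m)<n$ forces $v\notin L^1(\R^n)$ is exactly the detail needed to exclude the alternative limit $w_1$, and the passage from subsequential to full convergence via boundedness is sound.
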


\begin{lem}\label{w-lower-bd-lem}
Let $\eta>0$, $\rho>0$, $\beta>0$, $m$, $n$, $\alpha\le n\beta$, 
satisfy \eqref{m-relation} and \eqref{alpha-beta-relation}. Then 
\begin{equation}\label{v-lower-bd}
v(r)\ge\left(\eta^{m-1}+\frac{(1-m)\beta}{2(n-1)}r^2\right)^{-\frac{1}{1-m}}
\quad\forall r\ge 0.
\end{equation}
Hence there exists a constant $C_2>0$ such that
\begin{equation}\label{w-lower-bd}
w(r)\ge C_2\quad\forall r\ge 1.
\end{equation}
\end{lem}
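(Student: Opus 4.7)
The strategy is to recast the desired lower bound on $v$ as an upper bound on the auxiliary function $F(r):=v(r)^{m-1}=v(r)^{-(1-m)}$ and to derive a clean first-order differential inequality $F'(r)\le\frac{(1-m)\beta}{n-1}r$ from the hypothesis $\alpha\le n\beta$. Since $F(0)=\eta^{m-1}$, integrating this inequality is exactly what produces \eqref{v-lower-bd} after inverting.

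First, I would start from the first integral \eqref{v'-eqn} that was already derived in the proof of Lemma \ref{sequence-limit-thm},
$$-\frac{n-1}{m}(v^m)'(r)=\beta r v(r)+\frac{\alpha-n\beta}{r^{n-1}}\int_0^r z^{n-1}v(z)\,dz.$$
Because $\alpha-n\beta\le 0$ by hypothesis and $v>0$, the non-local term on the right-hand side is nonpositive. Dropping it yields the pointwise inequality
$$-(v^m)'(r)\le\frac{m\beta}{n-1}r\,v(r),\qquad\text{equivalently}\qquad v'(r)\ge-\frac{\beta}{n-1}r\,v(r)^{2-m}.$$
Next, differentiating $F(r)=v(r)^{m-1}$ gives $F'(r)=(m-1)v(r)^{m-2}v'(r)$; multiplying the previous inequality through by the negative quantity $(m-1)v^{m-2}$ flips the sign and produces $F'(r)\le\frac{(1-m)\beta}{n-1}r$, as desired.

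Integrating from $0$ to $r$ with $F(0)=\eta^{m-1}$ gives $v(r)^{m-1}\le\eta^{m-1}+\frac{(1-m)\beta}{2(n-1)}r^2$, and raising both sides to the power $1/(m-1)<0$ reverses the inequality to yield \eqref{v-lower-bd}. For the lower bound \eqref{w-lower-bd}, I observe that for $r\ge 1$ we have $\eta^{m-1}\le\eta^{m-1}r^2$, hence
$$w(r)=r^2v(r)^{1-m}\ge\frac{r^2}{\eta^{m-1}+\frac{(1-m)\beta}{2(n-1)}r^2}\ge\frac{1}{\eta^{m-1}+\frac{(1-m)\beta}{2(n-1)}}=:C_2>0.$$

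There is no genuine obstacle here; the argument is essentially a Bernoulli-type comparison. The only subtlety is bookkeeping the sign flips that come from $m-1<0$ (both when multiplying by $(m-1)v^{m-2}$ and when raising to the power $1/(m-1)$), and noticing that the hypothesis $\alpha\le n\beta$ is exactly what is needed so that the non-local term in the first integral can be discarded with the right sign.
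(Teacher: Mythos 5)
Your proposal is correct and follows essentially the same route as the paper: both start from the first integral \eqref{v'-eqn}, discard the nonlocal term using $\alpha\le n\beta$, integrate the resulting differential inequality for $v^{m-1}$ from $0$ to $r$ with $v(0)=\eta$, and then deduce \eqref{w-lower-bd} from \eqref{v-lower-bd}. The only cosmetic difference is that you package the computation via the auxiliary function $F=v^{m-1}$ and spell out the constant $C_2$ explicitly, which the paper leaves implicit.
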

\begin{proof}
\eqref{v-lower-bd} is proved on P.22 of \cite{DS2}. For the sake of
completeness we will give a simple different proof here.
By \eqref{v'-eqn},
\begin{align*}
-\frac{n-1}{m}(v^m)'(r)\le&\beta rv(r)\quad\forall r>0\\
\Rightarrow\qquad\quad -(n-1)v^{m-2}v'(r)\le&\beta r\quad\forall r>0\\
\Rightarrow\quad\frac{n-1}{1-m}(v(r)^{m-1}-\eta^{m-1})\le&\frac{\beta}{2}r^2
\quad\forall r>0
\end{align*}
and \eqref{v-lower-bd} follows. By \eqref{v-lower-bd}, we get 
\eqref{w-lower-bd} and the lemma follows.
\end{proof}

We now recall a result of \cite{H2}.

\begin{lem}(cf. Lemma 2.3 of \cite{H2})\label{w-upper-bd-lem}
Let $\eta>0$, $\rho>0$, $m$, $n$, $\alpha\ge n\beta>0$, satisfy 
\eqref{m-relation} and \eqref{alpha-beta-relation}.
Then there exists a constant $C_1>0$ such that \eqref{w-upper-bd} holds. 
\end{lem}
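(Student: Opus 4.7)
The plan is to adapt the argument used in Lemma~\ref{w-lower-bd-lem} by exploiting the fact that the sign of $\alpha-n\beta$ is now reversed. Starting from the once-integrated ODE \eqref{v'-eqn},
$$
-\frac{n-1}{m}(v^m)'(r)=\beta r v(r)+\frac{\alpha-n\beta}{r^{n-1}}\int_0^r z^{n-1}v(z)\,dz,
$$
I note that under the assumption $\alpha\ge n\beta>0$ both terms on the right are nonnegative (since $v>0$), so the nonlocal term can simply be discarded to yield the pointwise inequality
$$
-\frac{n-1}{m}(v^m)'(r)\ge\beta r v(r)\qquad\forall r>0.
$$
This is precisely the reverse of the inequality driving Lemma~\ref{w-lower-bd-lem}, so the same manipulation will now produce an upper, rather than a lower, bound on $v$.

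Concretely, dividing by $v(r)>0$ and using the identity $v^{m-2}v'=-(1-m)^{-1}(v^{m-1})'$, I obtain the first-order differential inequality
$$
\frac{n-1}{1-m}(v^{m-1})'(r)\ge\beta r\qquad\forall r>0.
$$
Integrating from $0$ to $r$ using $v(0)=\eta$ gives the pointwise lower bound
$$
v(r)^{m-1}\ge\eta^{m-1}+\frac{(1-m)\beta}{2(n-1)}r^2.
$$
Since $0<m<1$, inverting yields an upper bound on $v^{1-m}$, whence
$$
w(r)=r^2 v(r)^{1-m}\le\frac{r^2}{\eta^{m-1}+\frac{(1-m)\beta}{2(n-1)}r^2}\le\frac{2(n-1)}{(1-m)\beta}
$$
for every $r>0$. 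This establishes \eqref{w-upper-bd} with $C_1=2(n-1)/[(1-m)\beta]$, the same constant $w_1$ appearing in \eqref{w1-defn}.

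There is no essential obstacle: the entire argument rests on the single observation that when $\alpha\ge n\beta$ the nonlocal term in \eqref{v'-eqn} cooperates with the local term $\beta rv(r)$, so it can be discarded without losing the inequality we need to integrate. The bound in fact holds for all $r>0$, not merely $r\ge 1$, and the resulting constant $w_1$ reappears naturally in Lemma~\ref{sequence-limit-thm} as one of the possible accumulation values of $w$ at infinity.
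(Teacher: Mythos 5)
Your proof is correct, and it follows essentially the same skeleton as the paper's: integrate the equation once to obtain \eqref{v'-eqn}, bound the right-hand side from below by a local term in $v(r)$, and integrate the resulting differential inequality for $v^{m-1}$ from $0$ to $r$. The one place where you diverge is the treatment of the nonlocal term: you discard it outright (legitimate, since $\alpha-n\beta\ge 0$ and $v>0$ make it nonnegative), keeping only $\beta r v(r)$ and arriving at $C_1=\frac{2(n-1)}{(1-m)\beta}=w_1$, whereas the paper first notes from \eqref{v'-eqn} that $v'(r)<0$, uses this monotonicity to bound $\int_0^r z^{n-1}v(z)\,dz\ge \frac{r^n}{n}v(r)$, and thereby retains the nonlocal contribution, ending with the sharper constant $C_1=\frac{2n(n-1)}{\alpha(1-m)}\le\frac{2(n-1)}{(1-m)\beta}$ (the inequality being exactly $\alpha\ge n\beta$). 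Since any finite $C_1$ suffices for \eqref{w-upper-bd}, your simpler variant is perfectly adequate; the paper's version merely buys a slightly better constant at the cost of one extra observation, and neither constant is used quantitatively later.
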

\begin{proof}
This result is proved in \cite{H2}. For the sake of completeness we will repeat the 
proof here. By \eqref{v'-eqn}, $v'(r)<0$ for all $r>0$. Then by 
\eqref{v'-eqn},
\begin{align*}
&\frac{n-1}{m}r^{n-1}(v^m)'(r)\le-\beta r^nv(r)-(\alpha-n\beta)
\int_0^rz^{n-1}v(r)\,dz=-\frac{\alpha}{n}r^nv(r)\quad\forall r>0\\
\Rightarrow\quad&v^{m-2}(r)v'(r)\le-\frac{\alpha}{n(n-1)}r
\qquad\qquad\quad\forall r>0\\
\Rightarrow\quad&v(r)\le\left(\eta^{m-1}+\frac{\alpha(1-m)}{2n(n-1)}r^2\right)^{-\frac{1}{1-m}}
\le\left(\frac{2n(n-1)}{\alpha(1-m)}r^{-2}\right)^{\frac{1}{1-m}}\quad\forall r>0.
\end{align*}
Hence \eqref{w-upper-bd} holds with $C_1=\frac{2n(n-1)}{\alpha(1-m)}$ and the lemma follows.
\end{proof}

\begin{lem}\label{w-upper-bd-lem0}
Let $\eta>0$, $\rho>0$, $m$, $n$, $0<\alpha\le n\beta$, satisfy 
\eqref{m-relation} and \eqref{alpha-beta-relation}.
Then there exists a constant $C_1>0$ such that \eqref{w-upper-bd} holds.
\end{lem}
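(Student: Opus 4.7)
The case $0<\alpha\le n\beta$ complements Lemma \ref{w-upper-bd-lem}, but the simple argument there does not apply because the coefficient $\alpha-n\beta$ in \eqref{v'-eqn} now has the opposite sign, so the monotonicity bound $\int_0^r z^{n-1}v(z)\,dz \ge r^n v(r)/n$ pushes the estimate the wrong way. My plan is instead to compare $v$ with an explicit super-solution modeled on the Barenblatt-type profile that is exact when $\alpha=n\beta$.

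Concretely, I would set $A^{1-m} = \frac{2n(n-1)}{\alpha(1-m)}$, $B = (A/\eta)^{1-m}$, and $\bar v(r) = A(r^2+B)^{-1/(1-m)}$, so that $\bar v(0) = \eta = v(0)$ and $\bar v'(0) = 0 = v'(0)$. A direct substitution into the operator
\[
L_\alpha[u] := \tfrac{n-1}{m}\Delta u^m + \alpha u + \beta r u',
\]
simplified through the standing identity $\alpha(1-m)-2\beta = \rho$, should collapse to
\[
L_\alpha[\bar v] = \frac{2A(\alpha-n\beta)}{n(1-m)}\cdot\frac{r^2}{(r^2+B)^{(2-m)/(1-m)}}.
\]
The constant-in-$r$ part of the resulting bracket vanishes precisely because of the choice of $A$, and the remaining $r^2$-coefficient is $\le 0$ exactly under the hypothesis $\alpha\le n\beta$, so $\bar v$ is a classical super-solution on $(0,\infty)$.

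Next, a comparison principle for $L_\alpha$ with matching initial data at $r=0$ and vanishing values at infinity would give $v(r)\le \bar v(r)$ on $[0,\infty)$, and therefore
\[
w(r) = r^2 v(r)^{1-m} \le \frac{A^{1-m}\, r^2}{r^2+B} \le A^{1-m} = \frac{2n(n-1)}{\alpha(1-m)},
\]
which is exactly the constant $C_1$ appearing in Lemma \ref{w-upper-bd-lem}. The two lemmas then yield a uniform upper bound on $w$ with the same explicit constant for every $\alpha>0$.

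The main obstacle is justifying the comparison step, since the drift $\beta r v'$ and the degeneracy of $v\mapsto v^m$ block any naive application of the elliptic maximum principle; furthermore, the two-jets of $v$ and $\bar v$ at $r=0$ agree, so the strict super-solution inequality only bites for $r>0$. I would handle this by lifting $v$ and $\bar v$ to an exact solution and a super-solution of the parabolic fast-diffusion equation $u_t = \tfrac{n-1}{m}\Delta u^m$ via the generalized self-similar rescaling $u(x,t) = \phi(t)V(x\psi(t))$ with $\psi = \phi^{\beta/\alpha}$, which is consistent for arbitrary $\alpha,\beta>0$ rather than only the canonical case $\alpha(1-m)=1+2\beta$, and then invoking the standard parabolic comparison theorem for fast diffusion as in \cite{V1}.
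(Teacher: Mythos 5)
Your supersolution computation is correct: with $A^{1-m}=\frac{2n(n-1)}{\alpha(1-m)}$ and $B=(A/\eta)^{1-m}$ one does get $L_\alpha[\bar v]=\frac{2A(\alpha-n\beta)}{n(1-m)}\,r^2(r^2+B)^{-\frac{2-m}{1-m}}\le 0$ for $\alpha\le n\beta$. But the comparison step on which everything rests is not just hard to justify; the inequality $v\le\bar v$ you need is \emph{false} whenever $\alpha<n\beta$. Your conclusion would give $w(r)\le A^{1-m}=\frac{2n(n-1)}{\alpha(1-m)}$ for all $r$, whereas the exact decay rate \eqref{decay-rate} gives $\lim_{r\to\infty}w(r)=w_{\infty}$ and
\begin{equation*}
\frac{w_{\infty}}{A^{1-m}}=\frac{n\alpha(1-m)-2\alpha}{n\alpha(1-m)-2n\beta}>1
\quad\mbox{ whenever }\alpha<n\beta ,
\end{equation*}
both numerator and denominator being positive (the denominator is $n\rho$); so in fact $v(r)>\bar v(r)$ for all large $r$. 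The structural reason no comparison principle is available is that the zeroth-order coefficient $\alpha$ of $L_\alpha$ is positive, and moreover $v$ and $\bar v$ osculate to second order at the origin (one checks $(\bar v^m)''(0)=-\frac{m\alpha\eta}{n(n-1)}=(v^m)''(0)$ from \eqref{ode}), so there is no initial strict ordering to launch a first-crossing argument. The parabolic lifting does not repair this: to compare $\phi(t)v(x\psi(t))$ with $\phi(t)\bar v(x\psi(t))$ by the parabolic comparison theorem you must order them at the initial time, and that ordering is precisely the pointwise inequality $v\le\bar v$ you are trying to prove, so the argument is circular even before one worries about its truth.

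The paper's proof proceeds entirely differently, and some version of that route seems unavoidable here. It works directly with $w$ on the set $A=\{r\ge 1:w'(r)\ge 0\}$: using the integrated equation \eqref{v'-eqn}, the lower bound \eqref{w-lower-bd} of Lemma \ref{w-lower-bd-lem} (which forces $r^nv(r)\to\infty$ because $n(1-m)>2$), and l'Hospital's rule, it shows $\limsup_{r\in A,\,r\to\infty}\int_0^rz^{n-1}v(z)\,dz/(r^nv(r))\le\frac{1-m}{n(1-m)-2}$; substituting this back into \eqref{v'-eqn} and using $\alpha(1-m)-2\beta=\rho>0$ yields a differential inequality $w'\le\frac{2w}{r}\bigl(1-\frac{(1-m)C_3}{2}w\bigr)$ on $A$, hence a bound for $w$ on $A$, which then propagates to all large $r$ since $w$ decreases off $A$. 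Note that the constant obtained this way is not $\frac{2n(n-1)}{\alpha(1-m)}$, and by the computation above it cannot be.
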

\begin{proof}
Let $A=\{r\in [1,\infty):w'(r)\ge 0\}$. We now divide the proof into two cases.

\noindent{\bf Case 1}: $A\cap [R_0,\infty)\ne\phi\quad\forall R_0>1$.

\noindent We will use a modification of the proof of Lemma 3.2 of \cite{H2}
to proof this case. By Lemma \ref{w-lower-bd-lem} there exists a constant $C_2>0$
such that \eqref{w-lower-bd} holds. Hence by \eqref{w-lower-bd}, 
\begin{align}\label{r^n-v-infty}
&r^nv(r)=r^{n-\frac{2}{1-m}}w(r)^{\frac{1}{1-m}}\ge C_2r^{n-\frac{2}{1-m}}\quad\forall 
r\ge 1\notag\\
\Rightarrow\quad&r^nv(r)\to\infty\quad\mbox{ as }r\to\infty.
\end{align}
We now claim that 
\begin{equation}\label{term-ratio}
\limsup_{\substack{r\in A\\r\to\infty}}
\frac{\int_0^rz^{n-1}v(z)\,dz}{r^nv(r)}\le\frac{1-m}{n(1-m)-2}.
\end{equation}
We divide the proof of the above claim into two cases.

\noindent{\bf Case (1a)}: $\int_0^{\infty}z^{n-1}v(z)\,dz<\infty$.

\noindent By \eqref{r^n-v-infty} we get \eqref{term-ratio}.

\noindent{\bf Case (1b)}: $\int_0^{\infty}z^{n-1}v(z)\,dz=\infty$.

\noindent Since 
$$
\frac{d}{dr}(r^nv(r))=\left(n-\frac{2}{1-m}\right)r^{n-1}v(r)
+\frac{1}{1-m}r^{n-\frac{2}{1-m}}w^{\frac{m}{1-m}}(r)w'(r)\ge 
\left(n-\frac{2}{1-m}\right)r^{n-1}v(r)\quad\forall r\in A,
$$
by \eqref{r^n-v-infty} and the l'Hospital rule,
\begin{align*}
\limsup_{\substack{r\in A\\r\to\infty}}\frac{\int_0^rz^{n-1}v(z)\,dz}{r^nv(r)}
=&\limsup_{\substack{r\in A\\r\to\infty}}\frac{r^{n-1}v(r)}{\left(n-\frac{2}{1-m}\right)
r^{n-1}v(r)+\frac{1}{1-m}r^{n-\frac{2}{1-m}}w^{\frac{m}{1-m}}(r)w'(r)}\\
\le&\left(n-\frac{2}{1-m}\right)^{-1}
\end{align*}
and \eqref{term-ratio} follows. Let $0<\delta<\frac{\rho}{n(1-m)-2}$. 
By \eqref{term-ratio} there exists a constant $R_1>1$ such that
\begin{align}\label{term-compare}
&\frac{\int_0^rz^{n-1}v(z)\,dz}{r^nv(r)}<\frac{(1-m)}{n(1-m)-2}
+\frac{\delta}{1+n\beta-\alpha}
\qquad\qquad\quad\forall r\ge R_1, r\in A\notag\\
\Rightarrow\quad&\int_0^rz^{n-1}v(z)\,dz\le\left(
\frac{(1-m)}{n(1-m)-2}+\frac{\delta}{1+n\beta-\alpha}\right)r^nv(r)
\quad\forall r\ge R_1, r\in A.
\end{align}
By \eqref{v'-eqn} and \eqref{term-compare},
\begin{align*}
\frac{n-1}{m}r^{n-1}(v^m)'(r)
\le&-\beta r^nv(r)+\left(\frac{(n\beta -\alpha)(1-m)}{n(1-m)-2}+\delta\right)
r^nv(r)\notag\\
\le&-\left(\frac{\rho}{n(1-m)-2}-\delta\right)r^nv(r)
\quad\forall r\ge R_1, r\in A\notag\\
\Rightarrow\quad (n-1)v^{m-2}v'(r)
\le&-\left(\frac{\rho}{n(1-m)-2}-\delta\right)r\qquad\quad\forall r\ge R_1, r\in A.
\end{align*}
Hence there exists a constant $C_3>0$ such that
\begin{align}
&\frac{rv'(r)}{v(r)}\le -C_3r^2v(r)^{1-m}=-C_3w(r)\quad\forall r\ge R_1,r\in A\notag\\
\Rightarrow\quad&0\le w'(r)=\frac{2w(r)}{r}\left(1+\frac{1-m}{2}\cdot
\frac{rv'(r)}{v(r)}\right)\le\frac{2w(r)}{r}\left(1-\frac{(1-m)C_3}{2}w(r)\right)
\quad\forall r\ge R_1,r\in A\notag\\
\Rightarrow\quad&w(r)\le\frac{2}{(1-m)C_3}\quad\forall r\ge R_1,r\in A.
\label{w-upper-bd1}
\end{align}
Let $r_1\in A\cap [R_1,\infty)$. Then for any $r'\in (r_1,\infty)\setminus A$,
there exists $r_2\in A\cap [r_1,\infty)$ such that 
\begin{align}\label{w-upper-bd2}
&w'(r)<0\quad\forall r_2<r\le r'\quad\mbox{ and }\quad w'(r_2)=0\notag\\
\Rightarrow\quad&w(r')\le w(r_2)\le\frac{2}{(1-m)C_3}\quad\forall r'>r_1,r'\not\in A
\quad\mbox { (by \eqref{w-upper-bd1})}.
\end{align}
By \eqref{w-upper-bd1} and \eqref{w-upper-bd2},
$$
w(r)\le\frac{2}{(1-m)C_3}\quad\forall r\ge r_1
$$
and \eqref{w-upper-bd} holds with 
$C_1=\max\left(\frac{2}{(1-m)C_3},\max_{1\le r\le r_1}w(r)\right)$. 

\noindent{\bf Case 2}: There exists a constant $R_0>1$ such that $A\cap [R_0,\infty)
=\phi$.

\noindent Then $w'(r)<0$ for all $r\ge R_0$. Hence \eqref{w-upper-bd} holds with 
$C_1=\max_{1\le r\le R_0}w(r)$ and the lemma follows.
\end{proof}

\section{Proof of Theorem \ref{main-thm}}
\setcounter{equation}{0}
\setcounter{thm}{0}

We first recall a result of \cite{H1}:

\begin{lem}\label{h1-lower-bd}(cf. Lemma 2.1 of \cite{H1})
Let $\eta>0$, $m$, $n$, $\alpha>0$, $\beta\ne 0$ satisfy \eqref{m-relation} and 
\begin{equation*}\label{alpha-beta-relation4}
\frac{m\alpha}{\beta}\le n-2.
\end{equation*}
Let $v$ be the solution of \eqref{ode}, \eqref{initial-cond}. Then  
\begin{equation}\label{basic-monotone-ineqn}
v(r)+\frac{\beta}{\alpha}rv'(r)>0\quad\forall r\ge 0
\end{equation}
and
\begin{equation}\label{v'-negative}
v'(r)<0\quad\quad\forall r>0.
\end{equation}
\end{lem}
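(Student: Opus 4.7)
The plan is to introduce $\phi(r):=v(r)+(\beta/\alpha)\,r v'(r)$ and to prove \eqref{basic-monotone-ineqn} and \eqref{v'-negative} in tandem. First I would multiply \eqref{ode} by $r^{n-1}$ and rewrite it as
$$
(r^{n-1}(v^m)'(r))'=-\frac{m\alpha}{n-1}\,r^{n-1}\phi(r),
$$
so that integrating from $0$ (using $v'(0)=0$) yields
$$
r^{n-1}(v^m)'(r)=-\frac{m\alpha}{n-1}\int_0^r z^{n-1}\phi(z)\,dz.
$$
Since $m,\alpha>0$, this identity shows that $\phi>0$ on $(0,r)$ immediately forces $(v^m)'<0$, hence $v'<0$, on the same interval. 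Therefore \eqref{basic-monotone-ineqn} implies \eqref{v'-negative}, and it suffices to establish \eqref{basic-monotone-ineqn}.

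I would prove $\phi>0$ by contradiction. Continuity gives $\phi>0$ on some interval $(0,\delta)$ since $\phi(0)=\eta>0$. Suppose $r_0:=\inf\{r>0:\phi(r)=0\}<\infty$. Then $\phi>0$ and hence $v'<0$ on $(0,r_0)$, while $\phi(r_0)=0$ and $\phi'(r_0)\le 0$. The equation $\phi(r_0)=0$ gives $v'(r_0)=-\alpha v(r_0)/(\beta r_0)$, so $\beta$ must be positive: if $\beta<0$ the right-hand side is strictly positive, contradicting $v'\le 0$ at $r_0$.

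When $\beta>0$, I would exploit $\alpha v(r_0)+\beta r_0 v'(r_0)=0$ to reduce \eqref{ode} at $r_0$ to the relation $(v^m)''(r_0)+(n-1)(v^m)'(r_0)/r_0=0$, solve for $v''(r_0)$ via $(v^m)''=m(m-1)v^{m-2}(v')^2+mv^{m-1}v''$, and substitute into $\phi'(r_0)=(1+\beta/\alpha)v'(r_0)+(\beta/\alpha)r_0 v''(r_0)$. After cancellation the result is
$$
\phi'(r_0)=\frac{v(r_0)}{r_0}\left((n-2)-\frac{m\alpha}{\beta}\right),
$$
which is $\ge 0$ by hypothesis. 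Provided $m\alpha/\beta<n-2$ strictly, this gives $\phi'(r_0)>0$ and contradicts $\phi'(r_0)\le 0$.

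The main obstacle is the borderline case $m\alpha/\beta=n-2$ (necessarily with $\beta>0$), where the first-derivative test only yields $\phi'(r_0)=0$. I would treat it by a perturbation argument: for $\alpha_\epsilon:=\alpha-\epsilon$ with $\epsilon>0$ small, the strict case furnishes $\phi_\epsilon>0$ for the solution $v_\epsilon$ of \eqref{ode}, \eqref{initial-cond} with parameter $\alpha_\epsilon$ in place of $\alpha$, and continuous dependence on $\alpha$ passes to the limit $\phi\ge 0$ as $\epsilon\to 0^+$. A separate strong-maximum-type argument (or, alternatively, a direct computation of $\phi''(r_0)$ by differentiating \eqref{ode} once more) then upgrades this to the strict inequality $\phi>0$, completing the proof.
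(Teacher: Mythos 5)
The paper offers no proof of this lemma at all --- it is imported verbatim as Lemma~2.1 of \cite{H1} --- so your argument must stand on its own. For $\beta<0$ and for the strictly subcritical case $\frac{m\alpha}{\beta}<n-2$ it does: the identity $(r^{n-1}(v^m)')'=-\frac{m\alpha}{n-1}r^{n-1}\phi$ is a correct rewriting of \eqref{ode}; at a first zero $r_0$ of $\phi$ it gives $(v^m)'(r_0)<0$ strictly (since $\phi>0$ on $(0,r_0)$), which both rules out $\beta<0$ and fixes the sign of $v'(r_0)$; and I have checked that the computation of $\phi'(r_0)$ via $(v^m)''=m(m-1)v^{m-2}(v')^2+mv^{m-1}v''$ does yield exactly $\phi'(r_0)=\frac{v(r_0)}{r_0}\bigl((n-2)-\frac{m\alpha}{\beta}\bigr)$, so the strict case is complete.

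The genuine gap is the borderline case $\frac{m\alpha}{\beta}=n-2$, and one of your two proposed repairs provably cannot work. Set $q(r)=\frac{rv'(r)}{v(r)}+\frac{\alpha}{\beta}$, so that $\phi=\frac{\beta}{\alpha}vq$. Rewriting \eqref{ode} for $p=rv'/v$ gives $rp'=-p(mp+n-2)-\frac{\beta}{n-1}r^2v^{1-m}\bigl(p+\frac{\alpha}{\beta}\bigr)$, and in the borderline case this factors completely:
\begin{equation*}
rq'(r)=q(r)\Bigl((n-2)-m\,q(r)-\tfrac{\beta}{n-1}r^2v(r)^{1-m}\Bigr).
\end{equation*}
Consequently, if $q(r_0)=0$ then $q'(r_0)=0$, and by induction every derivative of $q$ (hence of $\phi$) vanishes at $r_0$; your suggested ``direct computation of $\phi''(r_0)$'' returns $0$ and no finite-order Taylor test can give a contradiction. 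The same identity, however, finishes the proof cleanly and makes the perturbation in $\alpha$ (which itself needs continuous dependence for a singular, degenerate IVP and global positivity of $v_\epsilon$) unnecessary: along the given solution the bracket is a continuous, bounded function $g$ on any $[\epsilon,r_0]\subset(0,\infty)$, so $q(r)=q(\epsilon)\exp\bigl(\int_\epsilon^r g(s)s^{-1}\,ds\bigr)$ cannot vanish at $r_0$ once $q(\epsilon)>0$, which holds for small $\epsilon$ because $rv'/v\to 0$ at the origin by \eqref{initial-cond}. (Equivalently: $q\equiv 0$, i.e.\ $v=cr^{-(n-2)/m}$, is an exact singular solution of \eqref{elliptic-eqn} in the borderline case, and ODE uniqueness forbids the regular solution from touching this invariant set.) So your ``strong-maximum-type'' alternative is the right idea but is left as a placeholder. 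Note finally that this paper only ever invokes the lemma under the strict inequality $\frac{m\alpha}{\beta}<n-2$, which it derives from \eqref{beta-lower-bd-cond}, so the equality case matters only for the lemma as stated, not for its use here.
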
 

\begin{lem}\label{r^{n-2}-vm-infty-limit-thm}
Let $\rho>0$, $m$, $n$, $\alpha>n\beta$, satisfy \eqref{m-relation}, 
\eqref{alpha-beta-relation} and \eqref{beta-lower-bd-cond}.
Then
\begin{equation}\label{r^{n-2}-vm-infty-limit}
\lim_{r\to\infty}r^{n-2}v^m(r)=\infty.
\end{equation} 
\end{lem}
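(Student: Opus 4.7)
The plan is to show that $F(r):=r^{n-2}v^m(r)$ has a strictly positive logarithmic derivative bounded below by $\varepsilon_0/r$ for some constant $\varepsilon_0>0$; this forces $F$ to grow at least like a positive power of $r$ and in particular to tend to infinity.

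First I would reformulate the hypothesis \eqref{beta-lower-bd-cond} as $m\alpha/\beta<n-2$. Using \eqref{alpha-beta-relation} to replace $\rho$ by $\alpha(1-m)-2\beta$, the inequality $\beta(n-2-mn)>m\rho$ reduces after a short calculation to $\beta(n-2)>m\alpha$. Since $\beta>0$ and $\alpha=(2\beta+\rho)/(1-m)>0$, this places us squarely in the setting of Lemma \ref{h1-lower-bd}, whose conclusion \eqref{basic-monotone-ineqn} can be rewritten (after division by $v>0$) as
\[
\frac{rv'(r)}{v(r)}>-\frac{\alpha}{\beta},\qquad\forall\,r\ge 0.
\]

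Next I would differentiate $F$ logarithmically to obtain
\[
\frac{F'(r)}{F(r)}=\frac{n-2}{r}+m\,\frac{v'(r)}{v(r)}=\frac{1}{r}\left[(n-2)+m\cdot\frac{rv'(r)}{v(r)}\right]>\frac{1}{r}\left[(n-2)-\frac{m\alpha}{\beta}\right]=\frac{\varepsilon_0}{r},
\]
with $\varepsilon_0:=(n-2)-m\alpha/\beta>0$ by the first step. Integrating the resulting inequality $(\log F)'>\varepsilon_0/r$ from $r=1$ to $r$ yields
\[
F(r)>F(1)\cdot r^{\varepsilon_0}\longrightarrow\infty\qquad\text{as }r\to\infty,
\]
since $F(1)=v(1)^m>0$. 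This is exactly \eqref{r^{n-2}-vm-infty-limit}.

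The only point requiring care is the algebraic equivalence of the hypothesis $\beta>m\rho/(n-2-mn)$ with the strict form of the bound $m\alpha/\beta\le n-2$ demanded by Lemma \ref{h1-lower-bd}; once this is verified, the rest of the argument is the one-line computation of $F'/F$ combined with a single integration, so I do not anticipate any serious obstacles. Note that the extra hypothesis $\alpha>n\beta$ of this lemma is not actually used in the argument, as all of the quantitative input is already encoded in \eqref{beta-lower-bd-cond}.
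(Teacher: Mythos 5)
Your proof is correct, and it is a genuinely different and more direct route than the paper's. The paper argues by contradiction: it assumes a finite subsequential limit $a_1$ of $r^{n-2}v^m(r)$, invokes the $L^\infty$ bounds on $w$ from Section 2 together with Lemma \ref{sequence-limit-thm}, and applies l'Hospital's rule to the integral identity \eqref{v-integral-eqn} to force $\lim_{i\to\infty} r_iv'(r_i)/v(r_i)=-(n-2)/m$, which is then contradicted by the bound $rv'/v>-\alpha/\beta>-(n-2)/m$ coming from Lemma \ref{h1-lower-bd}. You use exactly the same key inputs --- the algebraic equivalence of \eqref{beta-lower-bd-cond} with $m\alpha/\beta<n-2$ (your computation $\beta(n-2-mn+2m)>m\alpha(1-m)$, i.e.\ $\beta(n-2)(1-m)>m\alpha(1-m)$, is right, and $\beta>0$, $\alpha>0$ do follow from the hypotheses, so Lemma \ref{h1-lower-bd} applies) and the resulting pointwise inequality $rv'/v>-\alpha/\beta$ --- but you feed them directly into the logarithmic derivative of $F(r)=r^{n-2}v^m(r)$ to get $(\log F)'>\varepsilon_0/r$ and integrate. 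This avoids the contradiction scaffolding, the l'Hospital computations, and any reliance on the boundedness of $w$, and it yields the strictly stronger quantitative conclusion $F(r)\ge F(1)r^{\varepsilon_0}$. Your observation that the hypothesis $\alpha>n\beta$ is never used is also accurate; it appears in the statement only because that is the case in which the lemma is invoked later. The one point to state explicitly is that $v$ here is the solution of \eqref{ode}, \eqref{initial-cond} (the paper's standing assumption), since Lemma \ref{h1-lower-bd} is formulated for that initial value problem.
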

\begin{proof}
Suppose \eqref{r^{n-2}-vm-infty-limit} does not hold. Then there exists a sequence 
$\{r_i\}_{i=1}^{\infty}$, $r_i\to\infty$ as $i\to\infty$, such that
$r_i^{n-2}v^m(r_i)\to a_1$ as $i\to\infty$ for some constant $a_1\ge 0$. By
Lemma \ref{sequence-limit-thm} the sequence $\{r_i\}_{i=1}^{\infty}$ has a subsequence 
which we may assume without loss of generality to be the sequence itself such that
$w(r_i)\to a_2$ as $i\to\infty$ where $a_2=0$, $w_{\infty}$, or $w_1$ with $w_1$ being given
by \eqref{w1-defn}. By \eqref{v-integral-eqn}, Lemma \ref{w-upper-bd-lem}, 
Lemma \ref{w-upper-bd-lem0}, and the l'Hospital rule, 
\begin{align*}
\frac{(n-1)}{m}a_1
=&\frac{(n-1)}{m}\lim_{i\to\infty}r_i^{n-2}v(r_i)^m\notag\\
=&\beta\lim_{i\to\infty}\frac{\int_{r_i}^{\infty}s v(s)\,ds}{r_i^{2-n}}
+\lim_{i\to\infty}\frac{\int_{r_i}^{\infty}\frac{\alpha-n\beta}{s^{n-1}}
\left(\int_0^sz^{n-1}v(z)\,dz\right)\,ds}{r_i^{2-n}}\notag\\
=&\frac{\beta}{n-2}\lim_{i\to\infty}r_i^nv(r_i)
+\frac{\alpha-n\beta}{n-2}\lim_{i\to\infty}\int_0^{r_i}z^{n-1}v(z)\,dz\notag\\
=&\frac{\beta}{n-2}\lim_{i\to\infty}r_i^{n-2}v(r_i)^m\cdot\lim_{i\to\infty}r_i^2v(r_i)^{1-m}
+\frac{\alpha-n\beta}{n-2}\int_0^{\infty}z^{n-1}v(z)\,dz\notag\\
=&\frac{\beta}{n-2}a_1a_2+\frac{\alpha-n\beta}{n-2}\int_0^{\infty}z^{n-1}v(z)\,dz.
\end{align*}
Hence 
\begin{equation}\label{ratio-limit}
\frac{\alpha-n\beta}{a_1}\int_0^{\infty}z^{n-1}v(z)\,dz
=\frac{(n-1)(n-2)}{m}-\beta a_2.
\end{equation}
By \eqref{v'-eqn} and \eqref{ratio-limit},
\begin{align}
-(n-1)\lim_{i\to\infty}\frac{r_iv'(r_i)}{v(r_i)}=&\beta\lim_{i\to\infty}r_i^2v(r_i)^{1-m}
+\lim_{i\to\infty}\frac{(\alpha-n\beta)}{r_i^{n-2}v(r_i)^m}\int_0^{r_i}z^{n-1}v(z)\,dz
\label{rv'/v-identity}\\
=&\frac{(n-1)(n-2)}{m}.\notag
\end{align}
Hence
\begin{equation}\label{rv'/v-eqn1}
\lim_{i\to\infty}\frac{r_iv'(r_i)}{v(r_i)}=-\frac{(n-2)}{m}.
\end{equation}
By \eqref{m-relation}, \eqref{alpha-beta-relation} and \eqref{beta-lower-bd-cond}, 
\begin{equation*}\label{alpha-beta-relation5}
\frac{m\alpha}{\beta}<n-2
\end{equation*}
holds. Hence there exists a constant $\3>0$ such that 
\begin{equation}\label{alpha-beta-relation6}
\frac{m\alpha}{\beta}<n-2-\3.
\end{equation}
By \eqref{alpha-beta-relation6} and Lemma \ref{h1-lower-bd}, \eqref{basic-monotone-ineqn}
and \eqref{v'-negative} hold. Then by \eqref{basic-monotone-ineqn}, \eqref{v'-negative} 
and \eqref{alpha-beta-relation6},
\begin{align}
&0>\frac{rv'(r)}{v(r)}>-\frac{\alpha}{\beta}>-\frac{n-2}{m}+\frac{\3}{m}\quad\forall
r>0\label{rv'/v-upper-lower-bd}\\
\Rightarrow\quad&\lim_{i\to\infty}\frac{r_iv'(r_i)}{v(r_i)}\ge-\frac{n-2}{m}+\frac{\3}{m}.
\notag
\end{align}
which contradicts \eqref{rv'/v-eqn1}. Hence no such sequence $\{r_i\}_{i=1}^{\infty}$ 
exists and the lemma follows.
\end{proof}

\begin{lem}\label{w>epsilon-infinite-number-times-lem}
Let $\rho>0$, $m$, $n$, $\alpha>n\beta$, satisfy \eqref{m-relation}, 
\eqref{alpha-beta-relation} and \eqref{beta-lower-bd-cond}.
Then there exists a constant $\3\in (0,\min (1,w_{\infty}/2))$ such that for any 
$R_0>1$ there exists $r'>R_0$ such that
\begin{equation*}
w(r')\ge\3.
\end{equation*}
\end{lem}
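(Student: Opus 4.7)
The plan is to argue by contradiction. If no such $\varepsilon$ exists, then, since $w(r)\ge 0$, we must have $\lim_{r\to\infty}w(r)=0$. I would then exploit the identity
\begin{equation*}
-(n-1)\frac{rv'(r)}{v(r)} = \beta w(r) + \frac{\alpha-n\beta}{r^{n-2}v(r)^m}\int_0^r z^{n-1}v(z)\,dz,
\end{equation*}
obtained by multiplying \eqref{v'-eqn} by $-r/v^m$ (this is the identity underlying \eqref{rv'/v-identity} but now viewed pointwise in $r$), and show that under the assumption $w(r)\to 0$ the right-hand side tends to $0$, forcing $rv'(r)/v(r)\to 0$.

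The term $\beta w(r)$ obviously vanishes. For the integral term I would split into two cases. If $v\in L^1(\R^n)$, the numerator is bounded while the denominator $r^{n-2}v(r)^m\to\infty$ by Lemma \ref{r^{n-2}-vm-infty-limit-thm}, so the term goes to $0$. If $v\notin L^1(\R^n)$, both numerator and denominator tend to infinity and I would apply L'Hopital's rule. The derivative of the denominator is $r^{n-3}v^m\bigl[(n-2)+m\,rv'/v\bigr]$, and by \eqref{rv'/v-upper-lower-bd} (available here through \eqref{alpha-beta-relation6} together with Lemma \ref{h1-lower-bd}) the bracket is bounded below by a positive constant, so the derivative is strictly positive. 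The ratio of derivatives simplifies to $w(r)/[(n-2)+m\,rv'/v]$, which tends to $0$ since $w(r)\to 0$; hence the second term vanishes and $rv'(r)/v(r)\to 0$.

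Finally, I would turn $rv'/v\to 0$ into a contradiction with $w\to 0$. Given any small $\delta>0$ there is $R_\delta$ with $v'(r)/v(r)>-\delta/r$ for $r\ge R_\delta$, and integrating gives $v(r)\ge v(R_\delta)(R_\delta/r)^\delta$. Consequently
\begin{equation*}
w(r)=r^2 v(r)^{1-m}\ge C\,r^{2-\delta(1-m)},
\end{equation*}
and choosing $\delta<2/(1-m)$ makes the exponent positive, so $w(r)\to\infty$. This contradicts both the assumption $w(r)\to 0$ and the uniform upper bound supplied by Lemma \ref{w-upper-bd-lem}. Hence $\limsup_{r\to\infty}w(r)>0$ and any $\varepsilon\in(0,\min(1,w_\infty/2))$ below this lim sup satisfies the conclusion.

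The step I expect to be most delicate is the L'Hopital application in the non-integrable case, where it is essential that the derivative of $r^{n-2}v^m$ has a definite sign and in fact a positive lower multiple of $r^{n-3}v^m$. This is exactly the content of the strict inequality \eqref{beta-lower-bd-cond}, which through Lemma \ref{h1-lower-bd} upgrades the lower bound on $rv'/v$ to be strictly larger than $-(n-2)/m$; without this strict version the L'Hopital quotient could degenerate and the argument would collapse.
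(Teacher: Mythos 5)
Your proposal is correct and follows essentially the same route as the paper: contradiction from $\lim_{r\to\infty}w(r)=0$, the pointwise identity $-(n-1)\frac{rv'}{v}=\beta w+\frac{\alpha-n\beta}{r^{n-2}v^m}\int_0^r z^{n-1}v\,dz$, Lemma \ref{r^{n-2}-vm-infty-limit-thm}, l'Hospital, and the strict lower bound on $rv'/v$ supplied by \eqref{beta-lower-bd-cond} through Lemma \ref{h1-lower-bd}, all to force $rv'(r)/v(r)\to 0$. The only cosmetic difference is the final step: the paper notes that $w'(r)=\frac{2w(r)}{r}\bigl(1+\frac{1-m}{2}\cdot\frac{rv'(r)}{v(r)}\bigr)>0$ for large $r$, which already contradicts $w\to 0$, whereas you integrate $v'/v>-\delta/r$ to get $w(r)\ge Cr^{2-\delta(1-m)}\to\infty$; both are valid.
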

\begin{proof}
Suppose the lemma is false. Then
\begin{equation}\label{w-limit-0}
\lim_{r\to\infty}w(r)=0.
\end{equation}
We claim that
\begin{equation}\label{rv'/v-eqn2}
\lim_{r\to\infty}\frac{rv'(r)}{v(r)}=0.
\end{equation} 
By the proof of Lemma \ref{r^{n-2}-vm-infty-limit-thm} there exists a constant $\3>0$
such that \eqref{rv'/v-upper-lower-bd} holds. Suppose \eqref{rv'/v-eqn2} does not
hold. Then by \eqref{rv'/v-upper-lower-bd} and \eqref{w-limit-0} there exists a sequence 
$\{r_i\}_{i=1}^{\infty}$, $r_i\to\infty$ as $i\to\infty$, 
such that $r_iv'(r_i)/v(r_i)\to a_3$ as $i\to\infty$ for some constant $a_3$ satisfying
\begin{equation}\label{a3-lower-upper-bd}
-\frac{n-2}{m}+\frac{\3}{m}\le a_3<0
\end{equation}
and \eqref{rv'/v-identity} holds. By Lemma \ref{r^{n-2}-vm-infty-limit-thm}, 
\eqref{rv'/v-identity}, \eqref{w-limit-0} and \eqref{a3-lower-upper-bd}, we get
\begin{equation*}
-(n-1)\lim_{i\to\infty}\frac{r_iv'(r_i)}{v(r_i)}=0\qquad\qquad\mbox{ if } v\in L^1(\R^n)
\end{equation*}
and  if $v\not\in L^1(\R^n)$, then by the l'Hopsital rule, 
\begin{align*}
-(n-1)\lim_{i\to\infty}\frac{r_iv'(r_i)}{v(r_i)}
=&(\alpha-n\beta)\lim_{i\to\infty}\frac{r_i^{n-1}v(r_i)}{(n-2)r_i^{n-3}v(r_i)^m+mr_i^{n-2}
v(r_i)^{m-1}v'(r_i)}\\
=&(\alpha-n\beta)\lim_{i\to\infty}\frac{r_i^2v(r_i)^{1-m}}{n-2+m(r_iv'(r_i)/v(r_i))}\\
=&\frac{\alpha-n\beta}{n-2+ma_3}\cdot\lim_{i\to\infty}r_i^2v(r_i)^{1-m}\\
=&0.
\end{align*}
Hence
\begin{equation*}
a_3=\lim_{i\to\infty}\frac{r_iv'(r_i)}{v(r_i)}=0
\end{equation*}
which contradicts \eqref{a3-lower-upper-bd}. Thus no such sequence $\{r_i\}_{i=1}^{\infty}$ 
exists and \eqref{rv'/v-eqn2} follows. Since
\begin{equation*}
w'(r)=\frac{2w(r)}{r}\left(1+\frac{1-m}{2}\cdot\frac{rv'(r)}{v(r)}\right),
\end{equation*}
by \eqref{rv'/v-eqn2} there exists a constant $R_0>0$ such that such that 
\begin{equation*}
w'(r)>0\quad\forall r\ge R_0
\end{equation*}
which contradicts \eqref{w-limit-0} and the lemma follows.
\end{proof}

We are now ready for the proof of Theorem \ref{main-thm}.

\noindent {\ni{\it Proof of Theorem \ref{main-thm}}:}
We divide the proof into two cases.

\noindent{\bf Case 1}: $\alpha\le n\beta$.

By Corollary \ref{decay-rate-cor} , Lemma \ref{w-lower-bd-lem} and 
Lemma \ref{w-upper-bd-lem0}, we get \eqref{decay-rate}.

\noindent{\bf Case 2}: $\alpha>n\beta$.

By  Lemma \ref{w-upper-bd-lem} there exists a constant 
$C_1>0$ such that \eqref{w-upper-bd} holds. Let $0<\3<\min (1,w_{\infty}/2)$
be as in Lemma \ref{w>epsilon-infinite-number-times-lem}.
Suppose there exists a sequence $\{r_i\}_{i=1}^{\infty}$, $r_i\to\infty$ as
$i\to\infty$, such that $w(r_i)<\3$ for all $i\in\Z^+$. 
Then by Lemma \ref{w>epsilon-infinite-number-times-lem} there exists a subsequence
of $\{r_i\}_{i=1}^{\infty}$ which we may assume without loss of generality to be the
sequence itself and a sequence $\{r_i'\}_{i=1}^{\infty}$ such that $r_i<r_i'<r_{i+1}$
for all $i=1,2,\dots$ and 
\begin{equation}\label{w<epsilon<w}
w(r_i)<\3<w(r_i')\quad\forall i=1,2,\dots.
\end{equation}
By \eqref{w<epsilon<w} and the intermediate value theorem, for any $i=1,2,\dots$,
there exists $a_i\in (r_i,r_i')$ such that
\begin{equation*}
w(a_i)=\3\quad\forall i=1,2,\dots.
\end{equation*}
Hence $a_i\to\infty$ as $i\to\infty$ and
\begin{equation*}
\lim_{i\to\infty}w(a_i)=\3.
\end{equation*}
This contradicts Lemma \ref{sequence-limit-thm} and 
Remark \ref{w1-w-infty-compare-rmk}. Hence no such sequence $\{r_i\}_{i=1}^{\infty}$ 
exists. Thus there exists a constant $R_1>1$ such that 
$w(r)\ge\3$ for all $r\ge R_1$. Hence \eqref{w-lower-bd} holds with 
$C_2=\min (\3,\min_{1\le r\le R_1}w(r))>0$. By Corollary \ref{decay-rate-cor} we get 
\eqref{decay-rate} and the theorem follows.

{\hfill$\square$\vspace{6pt}}

\noindent {\ni{\it Proof of Corollary \ref{v-rescale-limit-cor}}:}
By Theorem \ref{main-thm}, 
$$
|x|^2v_{\lambda}(x)^{1-m}=(\lambda |x|)^2v(\lambda x)^{1-m}\to 
\frac{2(n-1)(n(1-m)-2)}{(1-m)(\alpha (1-m)-2\beta)}
\quad\mbox{ uniformly on }\R^n\setminus B_R(0) 
$$
as $\lambda\to\infty$ for any $R>0$ and the corollary follows.

{\hfill$\square$\vspace{6pt}}

\end{document}